\begin{document}

\newcommand\Lp{{\mathcal L}_+}
\newcommand\F{\mathcal F}
\newcommand\D{\mathcal D}
\newcommand\U{\mathcal U}
\newcommand\V{\Theta}
\newcommand\M{\mathcal M}
\newcommand\NN{\mathcal N}
\newcommand\J{\mathcal J}
\newcommand\T{\mathcal T}
\newcommand\JV{\mathcal{J}\Theta}
\newcommand\AV{\mathcal{AV}}
\newcommand\OO{\mathcal O}
\newcommand\RR{\mathcal R}
\newcommand\A{\OO}
\newcommand\hh{\widehat{\#}}
\newcommand\wo{\widehat{\otimes}}
\newcommand\C{\mathbb C}
\newcommand\R{\mathcal R}
\newcommand\Z{\mathbb Z}
\newcommand\N{\mathbb N}
\newcommand\Proj{\mathbb P}
\newcommand\Aff{\mathbb A}
\newcommand\Aut{\text{Aut\,}}
\newcommand\Diff{\text{Diff\,}}
\newcommand\Der{\text{Der\,}}
\newcommand\Ind{\text{Ind\,}}
\newcommand\End{\text{End\,}}
\newcommand\Span{\text{Span\,}}
\newcommand\Hom{\text{Hom}}
\newcommand\ad{\text{ad}}
\newcommand\tr{\text{tr}}
\newcommand{\dd}[1]{\frac{d}{d#1}}
\newcommand{\pd}[1]{\frac{\partial}{\partial#1}}
\newcommand{\pp}[2]{\frac{\partial#1}{\partial#2}}
\newcommand\del{\partial}
\newcommand\g{\mathfrak g}
\newcommand\m{\mathfrak m}
\newcommand\gl{\mathfrak{gl}}
\newcommand\hL{\widehat{L}}
\newcommand\tA{\widetilde{A}}
\newcommand\tV{\widetilde{V}}
\newcommand\Uw{U_\text{weak}}
\newcommand\Us{U_\text{strong}}
\newcommand\eps{\epsilon}

\newtheorem{theorem}{Theorem}
\newtheorem{proposition}[theorem]{Proposition}
\newtheorem{corollary}[theorem]{Corollary}
\newtheorem{lemma}[theorem]{Lemma}
\newtheorem{definition}[theorem]{Definition}
\newtheorem{remark}[theorem]{Remark}
\newtheorem{conjecture}[theorem]{Conjecture}
\newtheorem{example}[theorem]{Example}

\hyphenation{sub-algebra}

\

\title
[Sheaves of $AV$-modules]
{Sheaves of $AV$-modules on quasi-projective varieties}
\author{Yuly Billig}
\address{School of Mathematics and Statistics, Carleton University, Ottawa, Canada}
\email{billig@math.carleton.ca}
\address{Academia Sinica, Taipei, Taiwan}
\author{Emile Bouaziz}
\email{emile.g.bouaziz@gmail.com}
\address{}
\email{}
\let\thefootnote\relax\footnotetext{{\it 2020 Mathematics Subject Classification.}
Primary 17B66; Secondary 14F10, 17B10}

\maketitle

\begin{abstract}
We study sheaves of modules for the Lie algebra of vector fields with the action of the algebra of functions, compatible via the Leibniz rule. A crucial role in this theory is played by the virtual jets of vector fields -- jets that evaluate to a zero vector field under the anchor map. Virtual jets of vector fields form a vector bundle $\Lp$ whose fiber is Lie algebra $\hL_+$ of vanishing at zero derivations of power series. We show that a sheaf of $AV$-modules is characterized by two ingredients -- it is a module for $\Lp$ 
and an $\Lp$-charged $D$-module.

For each rational finite-dimensional representation of $\hL_+$, we construct a bundle of jet $AV$-modules. We also show that Rudakov modules may be realized as tensor products of jet modules with a $D$-module of delta functions.   
\end{abstract}

\section{Introduction}

The theory of $D$-modules (modules over the algebra of differential operators) has been very successful. The technique of $D$-modules was used in the proof of Kazhdan-Luzstig conjecture \cite{BB, BK} and in the formulation of the geometric Langlands correspondence \cite{F, GR}. $D$-modules may be viewed as modules over the Lie algebra of vector fields, together with the action of the algebra of functions, satisfying the Leibniz rule and subject to the condition that the composition $f \circ \eta$ of the actions by function $f$ and vector field $\eta$ coincides with the action by vector field $f \eta$.

There are many examples from geometry where we have the actions of both vector fields and functions, where the above composition property fails. It does not hold, for example, for the adjoint action of vector fields. An $AV$-module is a generalization of a $D$-module, where we drop the composition axiom. We say that $M$ is an $AV$-module if $M$ is a module for Lie algebra $V$ of vector fields and a module for the commutative algebra $A$ of functions, with two actions compatible via the Leibniz rule.

A general theory of $AV$-modules on a smooth affine algebraic variety was developed in \cite{BF, BFN, BNZ, BIN, BR}. The machinery of $AV$-modules was indispensable in establishing classifications of simple weight modules for the Lie algebras of vector fields on a torus \cite{BF}, and an affine space \cite{XL, GS}. 

The category of $AV$-modules may be presented as modules over an associative algebra $A \# U(V)$, which is the weak enveloping algebra of Lie-Rinehart pair $(A, V)$.

The goal of the present paper is to extend the theory of $AV$-modules to the case of a smooth quasiprojective variety $X$ of dimension $n$, which requires working in a sheaf-theoretic setting. It was seen in \cite{BI} that sheafification requires taking a completion $A \widehat{\#} U(V)$ of the associative algebra $A \# U(V)$. This completion is the strong enveloping algebra of the Lie-Rinehart pair $(A, A\widehat{\#} V)$,
where  $A\widehat{\#} V$ is the Lie algebra of $\infty$-jets of vector fields. This yields a quasicoherent sheaf $\AV$ of associative algebras.

Locally, in an \'etale chart, we have an isomorphism \cite{XL, BIN, BI}:
$$A \widehat{\#} U(V) \cong D \otimes U(\hL_+),$$
where $D$ is the algebra of differential operators and $\hL_+$ is the Lie algebra of derivations of the algebra of power series $K[[X_1, \ldots, X_n]]$ which vanish at $0$.
In fact, $\AV$ contains as a subsheaf a bundle $\Lp$ of Lie algebras with $\hL_+$ as its fiber. This is a bundle of virtual jets of vector fields, that is, jets that evaluate to a zero vector field under the anchor map. In contrast to this, locally defined algebras of differential operators do not glue into a subsheaf in $\AV$.

Our main result, Theorem \ref{main}, states that a sheaf of $AV$-modules is defined by two ingredients -- it is a sheaf of modules for $\Lp$, and also an $\Lp$-charged $D$-module (see Definition \ref{deform}).

In Section \ref{shjetmod} we construct an important class of sheaves of jet modules. For this, we generalize the notion of the jacobian of a change of coordinates. First, we show that $\infty$-jets of functions is a bundle of commutative algebras with fiber $K[[X_1, \ldots, X_n]]$. The group of automorphisms of $K[[X_1, \ldots, X_n]]$ is an infinite-dimensional algebraic group scheme 
with Lie algebra $\hL_+$. We show that a change of coordinates transformation on an intersection $U_1 \cap U_2$ of two \'etale charts gives rise to an element of $\Aut K[[X_1, \ldots, X_n]]$ with coefficients in $\OO(U_1 \cap U_2)$. If we factor out the terms in $ K[[X_1, \ldots, X_n]]$ of degrees greater than 1, this will reduce to the jacobian of the change of coordinates.

We use this generalization of the jacobian to construct a functor from the category of rational finite-dimensional $\hL_+$-modules to the category of sheaves of $AV$-modules. The corresponding jet module is a vector bundle with the given $\hL_+$-module as its fiber.

In 1974, Rudakov introduced a class of modules for the Lie algebra of vector fields on an affine space. Generalizations of Rudakov modules supported at a non-singular point of an arbitrary affine variety were constructed in \cite{BFN}. In Section \ref{Rud}, we give a realization of Rudakov modules as tensor products of jet modules with the $D$-module of delta functions supported at a given point.

In Section \ref{Holo}, we define holonomic sheaves of $AV$-modules. We state a conjecture that every holonomic $AV$-module is differentiable, that is, there exists $N \geq 1$ for which the module is annihilated by all elements of $A \# V$ of the form
$$\sum_{k=0}^N (-1)^k {N \choose k} f^k \# f^{N-k} \eta$$
with $f \in A$, $\eta \in V$. In \cite{BR}, the authors proved that $AV$-modules that are finitely generated over $A$, are differentiable. This result is a special case of this conjecture. Rudakov modules provide another class of holonomic modules for which the conjecture holds (Lemma \ref{Rdiff}).

As an illustration of our methods, in the final section of the paper, we construct two families of rank 2 bundles of $AV$-modules on $\Proj^1$.

\

{\bf Acknowledgements:} The authors benefited from helpful conversations with Colin Ingalls and Henrique Rocha.
Y.B. gratefully acknowledges support with a Discovery grant from the Natural Sciences and Engineering Research Council of Canada.

\section{$AV$-modules}

Let $K$ be an algebraically closed field of characteristic 0.
Let $A$ be the commutative algebra of functions on a smooth irreducible affine algebraic variety $X$, and let $V = \Der (A)$ be the Lie algebra of derivations on $A$. Finally, let $D$ be the associative algebra of differential operators, defined as the subalgebra in $\End_{K} (A)$ generated by $A$ (acting on itself by multiplication) and $V$ (acting on $A$ by derivations). 

An $AV$-module $M$ is a module for both Lie algebra $V$ and for commutative unital algebra $A$, with the two actions compatible via the Leibniz rule:
$$\eta (f m) = \eta(f) m + f (\eta m), \ \ \text{for \ } \eta \in V, \, f \in A, \, m \in M.$$

The commutative algebra $A$ and the Lie algebra $V$ themselves are naturally $AV$-modules, with the former even a $D$-module.

The category of $AV$-modules has a tensor product $M \otimes_A N$ and internal mapping spaces $\operatorname{Map}(M,N)$ adjoint to the tensor product in the usual sense that $$\operatorname{Hom}_{AV}(M\otimes_{A} N,L)=\operatorname{Hom}_{AV}(M,\operatorname{Map}(N,L)).$$ $\operatorname{Map}(M,N)$ is constructed as the module $\operatorname{Hom}_{A}(M,N)$ of all $A$-linear homomorphisms from $M$ to $N$, with an evident action of $V$. The unit of the tensor structure is the $AV$ module $A$, and so in particular the above adjunction induces an isomorphism $$\operatorname{Hom}_{AV}(A,\operatorname{Map}(M,N))=\operatorname{Hom}_{AV}(M,N).$$  We record in particular the \emph{dual} $AV$-module $\operatorname{Map}(M,A)$, which we recall has underlying $A$-module $\operatorname{Hom}_{A}(M,A)$.  

Taking the dual module of $V$, we construct the module of differential 1-forms: $\Omega^1 = \Hom_A (V, A)$. By taking a tensor product of $m$ copies of $V$ with $k$ copies of $\Omega^1$, we construct $AV$-modules of $(m, k)$-tensors.

There exists an associative algebra that controls the category of $AV$-modules. This algebra is the smash product $A \# U(V)$ of the universal enveloping algebra $U(V)$, viewed as a Hopf algebra, with its module $A$. As a vector space, it is the space 
$A \otimes_K U(V)$, where the commutation relations between the elements of $A$ and $V$ are given by the Leibniz rule:
$\eta \cdot f = \eta(f) + f \cdot \eta$.

There is a natural surjective homomorphism of associative algebras $A \# U(V) \rightarrow D$. This implies that every $D$-module is automatically an $AV$-module. $D$-modules are precisely $AV$-modules with an additional axiom
$$f\, (\eta\, m) = (f \eta)\, m.$$

While $A$ is a $D$-module, the $AV$-module $V$ does not have a natural $D$-module structure, since the Lie bracket in $V$ is not $A$-linear and so the above axiom does not hold in $V$.

The pair of algebras $(A, V)$ is a Lie-Rinehart pair. Let us recall the definition:

\begin{definition}
A pair $(\tA, \tV)$ is a Lie-Rinehart pair if $\tA$ is a unital commutative associative algebra, $\tV$ is a Lie algebra, $\tV$ is an $\tA$-module, $\tV$ acts on $\tA$ by derivations, and the Lie bracket in $\tV$ satisfies
$$[\mu, f \eta] = \mu(f) \eta + f [\mu, \eta], \ \ \ \text{for \ } f \in \tA, \ \mu, \eta \in \tV.$$
\end{definition}

A Lie-Rinehart pair $(\tA, \tV)$ has two enveloping algebras: {\it the weak enveloping algebra}
$$\Uw (\tA, \tV) = \tA \# U(\tV),$$
and  {\it the strong enveloping algebra}
$$\Us (\tA, \tV) = \Uw (\tA, \tV) / \left< f \# \eta - 1 \# f \eta \right>.$$

When $A$ is the algebra of functions on a smooth irreducible affine variety, and $V = \Der(A)$,  $\Us (A, V)$ is isomorphic to the algebra $D$ of differential operators on $X$.

It is an important fact that subspace $A \# V$ is a Lie subalgebra in $ A \# U(V)$ with Lie bracket
$$[ f \# \eta, g \# \mu] = f \eta(g) \# \mu - g \mu(f) \# \eta + fg \# [\eta, \mu].$$ 

Note that $(A, A\# V)$ is also a Lie-Rinehart pair, and
$$\Us (A, A\# V) \cong \Uw (A, V).$$ 

\begin{definition}
An $AV$-module $M$ is called differentiable if there exists $N \geq 1$ such that the following elements of $A \# V$
$$ \sum_{k=0}^N (-1)^k {N \choose k} f^k \# f^{N-k} \eta$$
annihilate $M$ for all $f \in A$ and $\eta \in V$.
\end{definition}

Note that an $AV$-module is differentiable with $N=1$ precisely when it is a $D$-module.

For an $AV$-module to be $N$-differentiable is equivalent to the action of $V$ being a differential operator of order at most $N$, in the sense of Grothendieck \cite{G}.
It is easy to see that the subcategory of $N$-differentiable $AV$-modules is closed under tensor products and duality.

\section{Group of automorphisms of the algebra of power series and its representations}

In this section, we discuss the group of automorphisms of the algebra $K[[X_1, \ldots, X_n]]$ of power series, and its finite-dimensional representations.

First of all, we point out that every automorphism of $K[[X_1, \ldots, X_n]]$ is continuous in the power series topology. To see this, let $\m = \left< X_1, \ldots, X_n \right>$ be the (unique) maximal ideal in $K[[X_1, \ldots, X_n]]$. Since this is the unique maximal ideal, every automorphism preserves $\m$, and hence also preserves all powers of $\m$. This implies continuity.

Continuity of automorphisms implies that every automorphism $F$ of $K[[X_1, \ldots, X_n]]$ is determined by the images of $X_1, \ldots, X_n$, and may be written as $X_i \mapsto F_i$, $i=1,\ldots, n$, where
$$F_i = \sum_{s \in \Z_+^n \backslash \{ 0 \} } \frac{1}{s!} \,A_{i, s} X^s, {\hskip 1cm} {\text with \ } A_{i,s} \in K,$$
where the matrix $(A_{i, \epsilon_j})$ is invertible. Here $\epsilon_j$ is an element of $\Z_+^n$ with a single 1 in position $j$.

This yields an infinite-dimensional group scheme with the algebra of functions $K[A_{i,s} \, | \, i=1, \ldots, n, \, s \in \Z_+^n \backslash \{ 0 \} ]_{(\det)}$, localized at $\det = \det(A_{i, \epsilon_j})$.

The algebra of functions on a group has a Hopf algebra structure, and for $\Aut K[[X_1, \ldots, X_n]]$  the coproduct is given on the generators as follows:
$$\Delta(A_{i,s}) = \sum_{r=1}^{|s|} \sum_{1 \leq j_1, \ldots, j_r \leq n} \ \sum_{p\in P_r (s)}
A_{i, \epsilon_{j_1} + \ldots + \epsilon_{j_r}} \otimes (A_{j_1, p_1} \cdot \ldots \cdot A_{j_r, p_r}),$$
$$\Delta(\det{}^{-1}) = \det{}^{-1} \otimes \det{}^{-1}.$$  
Here for $s \in \Z^n_+$ we use notiations $|s| = s_1 + \ldots + s_n$, $s! = s_1! \ldots s_n!$, etc,
and $P_r (s)$ is a set of partitions of $s$ into $r$ parts, $s = p_1 + \ldots + p_r$.

We point out the peculiar property of this coproduct that it is linear in the first tensor factor, and non-linear in the second.

For example, 
$$\Delta(A_{i, \epsilon_a + \epsilon_b}) = A_{i, \epsilon_k} \otimes A_{k, \epsilon_a + \epsilon_b}
+  A_{i, \epsilon_k + \epsilon_\ell} \otimes A_{k, \epsilon_a} A_{\ell, \epsilon_b}.$$
Here, and throughout the paper we use Einstein's notations, with summation over repeated indices.

The above example of the coproduct is a reflection of the chain rule:
$$\frac{\del^2}{\del x_a \del x_b} F_i ( G (x)) = \frac{\del F_i}{\del x_k} (G(x)) \frac{\del^2 G_k}{\del x_a \del x_b}
+ \frac{\del^2 F_i}{\del x_k \del x_\ell} (G(x)) \frac{\del G_k}{\del x_a} \frac{\del G_\ell}{\del x_b} .$$

The Lie algebra of the group $\Aut K[[X_1, \ldots, X_n]]$ is a (proper) subalgebra in the Lie algebra of derivations of 
$K[[X_1, \ldots, X_n]]$. An argument, similar to one given above, shows that every derivation of the algebra of power series is continuous. Thus
$$\Der K[[X_1, \ldots, X_n]] = \mathop\oplus\limits_{i=1}^n  K[[X_1, \ldots, X_n]] \frac{\del}{\del X_i} .$$
The Lie algebra of the group $\Aut K[[X_1, \ldots, X_n]]$ is 
$$\hL_+ = \mathop\oplus\limits_{i=1}^n \m \frac{\del}{\del X_i}.$$

Derivations $\frac{\del}{\del X_i}$ do not belong to the Lie algebra of the group of automorphisms, since they correspond to the shifts $X_i \mapsto X_i + a$, which are not automorphisms of the algebra of power series.

Let us discuss Lie correspondence between the group of automorphisms and $\hL_+$. The group 
$\Aut K[[X_1, \ldots, X_n]]$ is a semidirect product of $GL_n$ (linear automorphisms) and the normal subgroup $\NN$ of
automorphisms $X_i \mapsto X_i + \text{\it higher order terms}$, for all $i = 1, \ldots, n$. Subgroup $\NN$ is pronilpotent -- it has a descending chain of normal subgroups with finite-dimensional nilpotent quotients.

Using coordinates $\{ A_{i,s} \}$, we obtain the following realizations:
$$\\Aut K[[X_1, \ldots, X_n]] = GL_n \ltimes \prod\limits_{r=1}^\infty V \otimes S^r(V^*),$$
$$\NN = \{I\} \times \prod\limits_{r=1}^\infty V \otimes S^r(V^*),$$
where $V = K^n$.

Likewise, Lie algebra $\hL_+$ is the direct sum of its subalgebra $L_0 \cong gl_n$, spanned by $\left\{ X_j  \frac{\del}{\del X_i} \right\}$, and a pronilpotent ideal $\m \hL_+$.

The exponential map $\exp: \ \m \hL_+ \rightarrow \NN$ associates to a derivation $\eta$ an automorphism, sending $X_i$ to
$\exp(\eta) X_i$. It is easy to see that the exponential map between $\m\hL_+$ and $\NN$ is bijective.

\begin{example}
$$\exp\left(\alpha X^2 \frac{d}{dX}\right) g(X) = g\left( \frac{X}{1 - \alpha X} \right).$$
\end{example}
 
 Finally, let us consider finite-dimensional representations of $\hL_+$ and of the group of automorphisms of the algebra of power series.

\begin{lemma}
[cf. \cite{B}]
\label{nilp}
Let $W$ be a finite-dimensional module for $\hL_+$. Then there exists $N \in \N$, depending on $\dim W$, such that 
$\m^N \hL_+$ annihilates $W$.
\end{lemma}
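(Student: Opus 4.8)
The plan is to exploit the grading/filtration structure of $\hL_+$ together with finite-dimensionality of $W$. Write $\hL_+ = L_0 \oplus \bigoplus_{k \geq 1} L_k$ where $L_k$ is the span of the derivations $X^s \frac{\del}{\del X_i}$ with $|s| = k+1$ (so $L_0 \cong \gl_n$ and $\m\hL_+ = \bigoplus_{k\geq 1} L_k$, with $[L_i, L_j] \subseteq L_{i+j}$). The elements $X^s\frac{\del}{\del X_i}$ with $|s|=k+1$ span $L_k$, and I want to show they all act nilpotently, in fact that a high enough power of $\m\hL_+$ kills $W$.

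First I would reduce to a statement about a single ``Euler-type'' element. The grading element $E = \sum_i X_i \frac{\del}{\del X_i} \in L_0$ satisfies $[E, x] = k\, x$ for $x \in L_k$. Its image in $\gl(W)$ has finitely many eigenvalues, say contained in a set $S \subset K$ of size $d \leq \dim W$. If $w \in W$ is an $E$-eigenvector of eigenvalue $\lambda$ and $x \in L_k$, then $x w$ lies in the $(\lambda + k)$-eigenspace of $E$ (or is zero). Hence applying a product $x_{k_1}\cdots x_{k_r}$ of homogeneous elements of positive degrees $k_1,\dots,k_r$ to an $E$-eigenvector shifts the eigenvalue by $k_1 + \cdots + k_r \geq r$. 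Since $E$ has at most $\dim W$ distinct eigenvalues, once $r$ exceeds the ``diameter'' of $S$ — more precisely, once the shifted eigenvalue $\lambda + k_1 + \cdots + k_r$ escapes $S$ — the product must act as zero. The subtlety is that $E$ need not be semisimple on $W$, so I would instead decompose $W$ into generalized eigenspaces $W = \bigoplus_{\lambda \in S} W_\lambda$ for $E$; each positive-degree homogeneous $x$ still maps $W_\lambda$ into $W_{\lambda+k}$, and this decomposition is preserved, so the same counting argument applies: any product of more than $|S| \leq \dim W$ positive-degree homogeneous elements annihilates $W$.

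The key step is therefore: choose $N$ so large that $\m^N\hL_+$ is spanned by products (iterated brackets, hence spans of monomials in $U(\hL_+)$ of the relevant form) of homogeneous elements whose total degree forces the eigenvalue shift out of $S$. Concretely, $\m^N\hL_+ \subseteq \bigoplus_{k \geq N} L_k$ as a subspace of $\hL_+$, but to see it annihilates $W$ I use that each generator $X^s\frac{\del}{\del X_i}$ with $|s| = k+1 \geq N+1$ has degree $k \geq N$, so a single application already shifts any generalized $E$-eigenvalue by at least $N$; choosing $N > \max_{\lambda,\mu \in S}(\lambda - \mu)$ — which exists since $S$ is finite and $N$ depends only on $\dim W$ through $|S|$ and the spread of eigenvalues — guarantees $L_k W = 0$ for all $k \geq N$, hence $\m^N\hL_+$ annihilates $W$.

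The main obstacle is the non-semisimplicity of $E$ on $W$ and, relatedly, controlling the eigenvalues of $E$: a priori they are arbitrary elements of $K$, and one must argue that the relevant ``spread'' $\max(\lambda - \mu)$ is bounded in terms of $\dim W$ alone. This is where I would invoke the representation theory of $\gl_n$ inside $\hL_+$: the subalgebra $L_0 \cong \gl_n$ acts on the finite-dimensional module $W$, and the eigenvalues of $E$ (the identity of $\gl_n$) on any $\gl_n$-subquotient are integers bounded in absolute value by a function of $\dim W$ — essentially because the weights of any $d$-dimensional $\gl_n$-representation lie in a bounded region. Combining this integrality/boundedness with the grading-shift argument yields the desired $N = N(\dim W)$, completing the proof. (This is the argument of \cite{B}, which I would cite for the details of the $\gl_n$-weight bound.)
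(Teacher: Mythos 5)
Your overall strategy --- decompose $W$ into generalized eigenspaces of the Euler element $E=\sum_i X_i\frac{\del}{\del X_i}$ and use the fact that the degree-$k$ homogeneous component of $\m\hL_+$ shifts the generalized eigenvalue by $k$ --- is exactly the paper's, and it correctly shows that for the \emph{given} module $W$ only finitely many components $S_{k}\frac{\del}{\del X_i}$ can act nontrivially. But there are two genuine gaps. The first is that your justification of the dependence of $N$ on $\dim W$ rests on a false assertion: the eigenvalues of $E$ on a finite-dimensional $\gl_n$-module are \emph{not} integers bounded by a function of the dimension. The lemma does not assume $W$ rational (rationality only enters later, in Lemma \ref{Liecor}), and even for rational modules the claim fails --- on $\det^{m_1}\oplus\,\det^{m_2}$ the element $E$ has eigenvalues $nm_1$ and $nm_2$ with arbitrarily large spread in dimension $2$. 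So neither integrality nor a bound on the diameter of the spectrum can come from $\gl_n$-theory alone; one must use the Lie structure of $\m\hL_+$ itself. The fix is the idea you half-state and then abandon: writing $L_k=\bigoplus_i S_{k+1}\frac{\del}{\del X_i}$, one has $L_k=[L_1,L_{k-1}]$ for all $k\ge 3$, so $L_k$ is spanned by iterated brackets, hence by products in $U(\hL_+)$, of roughly $k$ elements of bounded positive degree; combined with your (correct) observation that any product of more than $\dim W$ positive-degree homogeneous elements kills $W$, this bounds the largest ``active'' degree by a function of $\dim W$ alone. (Equivalently: each active degree is a positive-integer difference of eigenvalues, of which there are at most $\binom{\dim W}{2}$, and the bracket relation forces the set of active degrees to be essentially an interval starting at $1$.) The paper's own proof is only a sketch on this point and defers to \cite{B}, but it does not assert anything false.

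The second gap is the passage from ``$S_k\frac{\del}{\del X_i}$ annihilates $W$ for all $k\ge N$'' to ``$\m^N\hL_+$ annihilates $W$''. You treat $\m^N\hL_+$ as if it were the algebraic direct sum $\bigoplus_{k\ge N}L_k$, but $\m\subset K[[X_1,\dots,X_n]]$, so a typical element of $\m^N\hL_+$ is an \emph{infinite} series of homogeneous components, and no continuity of the representation is assumed. Knowing that each graded piece acts by zero does not formally imply that such an infinite sum does. This is precisely what the paper's final commutator identity $\bigl[\,S_N\frac{\del}{\del X_i}\oplus S_{N+1}\frac{\del}{\del X_i},\ \m\frac{\del}{\del X_i}\,\bigr]=\m^N\frac{\del}{\del X_i}$ is for: it exhibits every element of the completed ideal as a \emph{finite} sum of brackets whose first factor is already known to act by zero. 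Without this (or some equivalent device) your argument only kills the polynomial part of $\m^N\hL_+$.
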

\begin{proof}
Let us outline the sketch of the proof. A version of this Lemma for the algebra of polynomials was given in \cite{B}.

Consider the action on $W$ by derivation $X_1 \frac{\del}{\del X_1} + \ldots + X_n \frac{\del}{\del X_n}$. The space $W$ may be decomposed into a direct sum of the generalized eigenspaces with respect to this operator. Denote by $S_k$ the span of monomials in $X_1, \ldots, X_n$ of total degree $k$. Then $S_k \frac{\del}{\del X_i}$ increases the eigenvalue by $k-1$. 
Since the total number of the generalized eigenspaces is finite, we conclude that for some $N \in \N$,
$S_k \frac{\del}{\del X_i}$ annihilates $W$ for all $k \geq N$ and for all $i=1,\ldots, n$.

It is easy to check that for $N > 1$
$$ \left[ S_N  \frac{\del}{\del X_i} \oplus S_{N+1}  \frac{\del}{\del X_i}, \, \m \frac{\del}{\del X_i} \right]
= \m^N \frac{\del}{\del X_i}.$$
This implies the claim of the Lemma.
\end{proof}

Let $W$ be a finite-dimensional $\hL_+$-module. We will call $W$ rational if the action of subalgebra $L_0$ integrates to a rational $GL_n$-module.

\begin{lemma}
\label{Liecor}
Let $(W, \rho)$ be a finite-dimensional rational  $\hL_+$-module.
Then $W$ admits the action of the group $\Aut  K[[X_1, \ldots, X_n]]$, compatible with the exponential map. 
\end{lemma}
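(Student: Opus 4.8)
The plan is to decompose the problem along the semidirect product structure $\Aut K[[X_1,\ldots,X_n]] = GL_n \ltimes \NN$ and handle the two factors by different mechanisms: the reductive factor $GL_n$ by the hypothesis of rationality, and the pronilpotent factor $\NN$ by exponentiating a nilpotent Lie algebra action. By Lemma \ref{nilp}, there is $N$ with $\m^N \hL_+$ acting as zero on $W$, so the action of $\hL_+$ factors through the finite-dimensional quotient $\hL_+ / \m^N \hL_+$, and correspondingly we expect the desired group action to factor through the finite-dimensional algebraic group $\Aut K[[X_1,\ldots,X_n]] / \NN_N$, where $\NN_N$ is the (normal) subgroup corresponding to $\m^N \hL_+$. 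This reduces us to a genuinely finite-dimensional situation: a finite-dimensional Lie algebra $\g = \hL_+/\m^N\hL_+$, which is the semidirect sum of the reductive $L_0 \cong \gl_n$ and the nilpotent ideal $\m\hL_+/\m^N\hL_+$, acting on the finite-dimensional space $W$, and the corresponding connected group is $\widetilde{GL_n} \ltimes \exp(\m\hL_+/\m^N\hL_+)$.

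First I would integrate the nilpotent part. Since $\m\hL_+/\m^N\hL_+$ is a finite-dimensional nilpotent Lie algebra acting on a finite-dimensional vector space $W$ — though not necessarily nilpotently as operators — one must be slightly careful: the standard statement that a nilpotent Lie algebra action exponentiates requires the operators to be nilpotent, which is not automatic here. I would instead argue that $\m\hL_+/\m^N\hL_+$ is nilpotent \emph{as a Lie algebra}, hence is the Lie algebra of a unipotent-by-nothing group, but since the representation need not be by nilpotent operators, the clean route is: the simply connected group with Lie algebra $\m\hL_+/\m^N\hL_+$ is the unipotent group $\NN/\NN_N$ itself (via the exponential bijection already noted in the excerpt between $\m\hL_+$ and $\NN$), and any finite-dimensional Lie algebra representation of the Lie algebra of a simply connected group integrates uniquely to a representation of the group by the Lie correspondence for the simply connected case. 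This gives a rational action of $\NN/\NN_N$ on $W$ compatible with $\exp$. Next I would integrate $L_0 \cong \gl_n$: by hypothesis $W$ is rational as an $L_0$-module, meaning the $L_0$-action integrates to a rational $GL_n$-action — this is given to us.

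The main obstacle is gluing the two integrated actions into a single action of the semidirect product $GL_n \ltimes (\NN/\NN_N)$ — i.e. checking the compatibility (cocycle / equivariance) condition $g \cdot \exp(\eta) \cdot g^{-1} = \exp(\mathrm{Ad}(g)\eta)$ at the group level, given that the analogous identity $[\,\cdot\,,\,\cdot\,]$-compatibility holds at the Lie algebra level. The strategy here is that both sides, as functions of $g \in GL_n$ with $\eta$ fixed (and then as functions of $\eta$), are morphisms of algebraic varieties into $GL(W)$, they agree to first order along the identity by construction, and $GL_n$ is connected; more robustly, one forms the semidirect-product group $G = GL_n \ltimes (\NN/\NN_N)$ as an algebraic group, observes that its Lie algebra is precisely $\g = \hL_+/\m^N\hL_+$, and notes that $G$ has the same fundamental group as $GL_n$ (since the unipotent part is simply connected and contractible), so that a representation of $\g$ integrates to $G$ provided it integrates to a maximal reductive subgroup — which is exactly the rationality hypothesis. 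Once the $G$-action is in hand, pulling back along $\Aut K[[X_1,\ldots,X_n]] \twoheadrightarrow G$ gives the desired action, and compatibility with $\exp: \m\hL_+ \to \NN$ is immediate from the construction on the nilpotent factor together with the fact that $\exp$ on $\m^N\hL_+$ lands in the kernel $\NN_N$, which acts trivially.
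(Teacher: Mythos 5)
Your skeleton matches the paper's: reduce via Lemma \ref{nilp} to the finite-dimensional quotient $\hL_+/\m^N\hL_+$, integrate the nilpotent ideal by exponentiation, use rationality to integrate $L_0\cong\gl_n$, and glue along the semidirect product. But there is a genuine gap at exactly the step you flag and then wave away: integrating the action of $\m\hL_+/\m^N\hL_+$. You correctly note that exponentiating requires the operators $\rho(\eta)$ to be nilpotent and that this is not automatic for a representation of a nilpotent Lie algebra; your substitute --- ``any finite-dimensional representation of the Lie algebra of a simply connected group integrates'' --- is a theorem of real or complex Lie theory and fails in the algebraic category over the paper's ground field $K$ (an arbitrary algebraically closed field of characteristic $0$). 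Already for $\mathbb{G}_a$, with Lie algebra $K$, the representation $1\mapsto \mathrm{id}_W$ does not integrate to a rational representation, since $t\mapsto e^{t}$ is not regular; and even over $\C$ the analytic argument would only yield a holomorphic, not rational, action, whereas rationality is what makes the transition functions $\rho(\varphi_G)$ of $\J^W$ regular later on. The missing fact, which the paper extracts from the proof of Lemma \ref{nilp}, is that $\m\hL_+/\m^N\hL_+$ really does act on $W$ by nilpotent operators: each $S_k\frac{\del}{\del X_i}$ with $k\geq 2$ strictly raises the generalized eigenvalue of the Euler operator $\sum_i X_i\frac{\del}{\del X_i}$, and $W$ has only finitely many generalized eigenvalues. (Alternatively, $\m\hL_+$ is a nilpotent ideal contained in $[\hL_+,\hL_+]$, hence acts nilpotently in any finite-dimensional representation.) With that in hand, $\exp(\rho(\eta))$ is a finite sum and defines a rational action of the unipotent quotient $\overline{\NN}$.

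The gluing step also leans on transcendental input (fundamental groups, ``agreement to first order plus connectedness''), again unavailable over general $K$; and in any case first-order agreement at the identity does not force equality, since $g\mapsto\rho(g)\rho(\exp\eta)\rho(g)^{-1}$ is not a homomorphism in $g$. The paper instead applies Borel's Theorem 7.6 to the two algebraic groups $GL_n$ and $\overline{\NN}$ acting rationally on $W$, obtaining an algebraic group with Lie algebra $\hL_+/\m^N\hL_+$, and then verifies the compatibility $\rho(g)\rho(\exp(\eta))\rho(g^{-1})=\rho(g\exp(\eta)g^{-1})$ by hand on a generating set of $GL_n$ (exponentials of nilpotent elements of $sl_n$ together with scalar matrices), where it reduces to $\exp(\ad\rho(u))\rho(\eta)=\rho(\exp(\ad u)\eta)$. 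You would need to supply an argument of this concrete kind to close the proof.
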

\begin{proof}
We have seen in the previous Lemma that there exists $N$ such that $\m^N \hL_+$ annihilates $W$. Thus $W$ is a module for a finite-dimensional Lie algebra $\hL_+ / \m^N \hL_+$, which is a semidirect product of $gl_n$ with a nilpotent ideal
$\m \hL_+ / \m^N \hL_+$. 
It follows from the proof of Lemma \ref{nilp} that $\m \hL_+ / \m^N \hL_+$ acts on $W$ by nilpotent operators.
Using the exponential map, we can define the action on $W$ of the nilpotent quotient group
$$\overline{\NN} = \{I\} \times \prod\limits_{r=1}^N V \otimes S^r(V^*).$$ 
Now we have two algebraic groups, $GL_n$ and $\overline{\NN}$, acting rationally on $W$.
Their Lie algebras span the Lie algebra $\hL_+ / \m^N \hL_+$, which also acts on $W$. By Theorem 7.6 in \cite{Bo},
these actions extend to the action on $W$ of an algebraic group with Lie algebra $\hL_+ / \m^N \hL_+$.
Let us show that this algebraic group is 
$$GL_n \ltimes \prod\limits_{r=1}^N V \otimes S^r(V^*).$$ 
Group structure in this group is determined by the group structures of $GL_n$, $\overline{\NN}$, and by the conjugation action of $GL_n$ on $\overline{\NN}$. Since we know that $GL_n$ and $\overline{\NN}$ act on $W$, we only need to show that
\begin{equation}
\label{conj}
\rho(g) \rho(\exp(\eta)) \rho(g^{-1}) = \rho( g \exp(\eta) g^{-1}),
\end{equation}
for all $g \in GL_n$ and $\eta \in \m \hL_+ / \m^N \hL_+$. 
We have 
$$\rho(g) \rho(\exp(\eta)) \rho(g^{-1})  = \rho(g) \exp(\rho(\eta)) \rho(g^{-1}) = \exp\left( \rho(g) \rho(\eta) \rho(g^{-1}) \right).$$ 
Let $g = \exp(u)$, where $u$ is a nilpotent element in $sl_n$. Then 
\begin{align*}
\rho(g) \rho(\eta) \rho(g^{-1}) &= \exp( \rho(u)) \rho(\eta) \exp(- \rho(u)) = \exp \left( \ad \rho(u) \right) \rho(\eta) \\
&= \rho( \exp( \ad u) \eta ) = \rho( g \exp(\eta) g^{-1} ).
\end{align*}
In a similar way, we can see that this equality is also valid for the scalar matrices in $GL_n$. Since the desired equality holds for the exponentials of the nilpotent elements, which, together with scalar matrices, generate $GL_n$, relation (\ref{conj}) holds for all $g \in GL_n$. This shows that we have an action of $GL_n \ltimes \overline{\NN}$ on $W$. Finally, if we postulate that the subgroup corresponding to $\m^N \hL_+$ acts on $W$ trivially, we get the action of the group $\Aut K[[X_1, \ldots, X_n]]$. 
\end{proof}

\section{Jets}

Passing from $AV$-module theory on an affine variety to a sheaf-based theory on non-affine varieties will require taking completions of our algebras. This is done using the notion of jets.

From now on, let $X$ be a smooth quasiprojective variety of dimension $n$ with the sheaves $\OO$ of functions,
$\V$ of vector fields, and $\D$ of differential operators.

In order to perform a local analysis of the sheaves we are interested in, we will use {\it \'etale charts}.

 \begin{definition}
An affine open subset $U \subset X$ is called an \'etale chart if there exist functions
$x_1, \ldots, x_n \in A = \A(U)$ such that

(1) the set $\{x_1, \ldots, x_n\}$ is algebraically independent, that is \break
$K[x_1, \ldots, x_n] \subset A$,

(2) every $f \in A$ is algebraic over $K[x_1, \ldots, x_n]$,

(3) derivations $\pd{x_1}, \ldots, \pd{x_n}$ of  $K[x_1, \ldots, x_n]$ extend to derivations of $A$. 
\end{definition}

We will call such $(x_1, \ldots, x_n)$ \emph{uniformizing parameters} on $U$. Since $A$ is algebraic over 
$K[x_1, \ldots, x_n]$, an extension of $\pd{x_i}$ to $A$ is unique. Vector fields  $\pd{x_1}, \ldots, \pd{x_n}$ commute.

\begin{lemma} (\cite[Theorem III.6.1]{Mu}, \cite{BF})
\label{Mumford}
Let $U$ be an \'etale chart of $X$ with uniformizing parameters $(x_1, \ldots, x_n)$.
Let  $A = \A(U)$, $V = \V(U)$, $D = \D(U)$. Then

(1)
$$ V = \mathop\bigoplus\limits_{i=1}^n A \pd{x_i},$$

(2)
$$ D = \mathop\bigoplus\limits_{k \in \Z_+^n} A \partial^k,$$
where for $k = (k_1, \ldots, k_n)$ we set $\partial^k = 
\left( \pd{x_1} \right)^{k_1} \ldots \left( \pd{x_n} \right)^{k_n}$.

(3)
$$\Omega^1 (U) =  \mathop\bigoplus\limits_{i=1}^n A d x_i$$
with the differential given by
$$df = \sum\limits_{i=1}^n \frac {\partial f}{\partial x_i} dx_i  \text{ \ for \ } f \in A.$$

(4) The map $(x_1, \ldots, x_n): \ U \rightarrow \Aff^n$ is \'etale.

\end{lemma}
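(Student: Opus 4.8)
The plan is to deduce all four statements from the étale hypothesis, reducing everything to the model case $X = \Aff^n$ where the claims are elementary, and then transporting along the étale map. First I would prove (4), since it is essentially a restatement of the definition: the three conditions on uniformizing parameters say precisely that $A$ is a finite (indeed, separable, since $\operatorname{char} K = 0$) extension of $K[x_1,\ldots,x_n]$ with the property that the Kähler differentials $\Omega^1_{K[x]} \otimes_{K[x]} A \to \Omega^1_A$ is an isomorphism, which is one of the standard characterizations of étaleness. Concretely, the surjectivity of $\pd{x_i} \mapsto \pd{x_i}$ and the uniqueness of the extension (noted just before the lemma) give that $\operatorname{Der}_{K}(A) \to \operatorname{Der}_K(K[x]) \otimes A$ is an isomorphism, which is what we want, and this is exactly the content of \cite[Theorem III.6.1]{Mu}.

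Next I would establish (1). One containment is immediate: each $a_i \pd{x_i}$ with $a_i \in A$ is a derivation of $A$ by condition (3). Conversely, given $\eta \in V = \operatorname{Der}_K(A)$, set $a_i := \eta(x_i) \in A$ and form $\eta' := \eta - \sum_i a_i \pd{x_i}$. Then $\eta'$ is a derivation of $A$ killing every $x_i$, hence killing $K[x_1,\ldots,x_n]$. Since $A$ is algebraic — indeed separable — over $K[x]$, any element $f \in A$ satisfies a polynomial relation $P(f) = 0$ with $P' (f) \ne 0$ and coefficients in $K[x]$; applying $\eta'$ gives $P'(f)\,\eta'(f) = 0$, so $\eta'(f) = 0$. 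Thus $\eta' = 0$ and the decomposition is a direct sum (directness is clear from evaluating on $x_j$). Then (3) for $\Omega^1(U)$ follows by dualizing: $\Omega^1(U) = \operatorname{Hom}_A(V,A) = \operatorname{Hom}_A\big(\bigoplus_i A\pd{x_i}, A\big) = \bigoplus_i A\, dx_i$ where $dx_i$ is the dual basis, and the stated formula $df = \sum_i \frac{\partial f}{\partial x_i} dx_i$ is just the evaluation of the universal derivation on the basis $\pd{x_i}$.

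For (2), one inclusion is clear since $D$ is generated by $A$ and $V = \bigoplus_i A\pd{x_i}$, so every element of $D$ is an $A$-linear combination of products of the $\pd{x_i}$, and since these commute (noted before the lemma) such products reduce to the monomials $\partial^k$; hence $D = \sum_{k} A\partial^k$. The real work is showing the sum is \emph{direct}, i.e.\ that the $\partial^k$ are left $A$-linearly independent in $\operatorname{End}_K(A)$. This is where I would use étaleness in an essential way: pass to the completion at a closed point, or equivalently use that the étale map $(x_1,\ldots,x_n)\colon U \to \Aff^n$ induces isomorphisms on completed local rings $\widehat{\OO}_{U,p} \cong K[[x_1 - c_1,\ldots,x_n - c_n]]$, under which $\partial^k$ becomes the honest partial-derivative operator. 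A nontrivial relation $\sum_k a_k \partial^k = 0$ in $\operatorname{End}_K(A)$ would, after completing, force $\sum_k a_k \partial^k = 0$ on $K[[x]]$; testing against monomials $\prod_i (x_i - c_i)^{m_i}$ and inducting on $|m|$ shows all $a_k = 0$ locally at every point, hence globally since $X$ is a variety (reduced and irreducible). I expect this directness argument — the careful use of the completed local rings to import the $\Aff^n$ computation — to be the main obstacle; everything else is bookkeeping with the Leibniz rule and separability. Since the statement is attributed to \cite[Theorem III.6.1]{Mu} and \cite{BF}, I would in the write-up simply cite these for (2) and (4), and give the short direct arguments above for (1) and (3).
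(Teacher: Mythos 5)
The paper offers no proof of this lemma; it is quoted from \cite[Theorem III.6.1]{Mu} and \cite{BF}, and your argument is in substance the standard one found in those sources: (1) via subtracting off $\sum_i \eta(x_i)\pd{x_i}$ and killing the remainder by separability, (3) by dualizing, (2) by the Leibniz rule plus an independence argument, (4) from the characterization of \'etale maps via differentials. So the proposal is correct and essentially matches the cited proof. Two small remarks. First, $A$ is in general \emph{not} a finite $K[x_1,\ldots,x_n]$-module (e.g.\ $A=K[x,x^{-1}]$ over $K[x]$ already fails this); it is only algebraic over $K[x_1,\ldots,x_n]$ with finite fraction-field extension. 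Your parenthetical ``finite'' is therefore inaccurate, but harmless, since the separability argument $P'(f)\,\eta'(f)=0$ with $P'(f)\neq 0$ in the domain $A$ uses only algebraicity and characteristic $0$. Second, the passage to completed local rings in (2) is more machinery than is needed: given a relation $\sum_k a_k\partial^k=0$ in $\End_K(A)$, pick $k_0\in\Z_+^n$ of minimal total degree among indices with $a_{k}\neq 0$ and apply the operator to the monomial $x^{k_0}\in K[x_1,\ldots,x_n]\subset A$; every term with $k\neq k_0$ kills $x^{k_0}$ (either $a_k=0$ by minimality or $k\not\leq k_0$ componentwise), leaving $k_0!\,a_{k_0}=0$ and hence $a_{k_0}=0$, a contradiction. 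This keeps the whole proof inside $A$ and avoids the density and continuity checks your completion argument would require.
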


Any smooth irreducible quasi-projective variety $X$ has an atlas $\{ U_i \}$ of \emph{\'etale charts}, $X = \mathop\bigcup\limits_i U_i$ (see e.g. \cite{Mu}, \cite{BF}).



Let $U$ be an affine open set, and let $A = \OO(U)$, $V = \V(U)$, $D = \D(U)$.

 Let $\Delta$ be the kernel of the multiplication map $A \otimes_K A \rightarrow A$. The algebra $J = A {\widehat\otimes}_K A$ of jets of functions is defined on affine open sets $U$ as a completion of  $A \otimes_K A$:
$$J = \varprojlim\limits_m (A \otimes_K A) / \Delta^m.$$
This yields the sheaf of jets of functions $\J$ with $\J(U) = J$.
  
\begin{lemma}[\cite{BI}]
\label{locjet}
Let $U$ be an \'etale chart of $X$ with uniformizing parameters $(x_1, \ldots, x_n)$, and let $A = \OO(U)$.
Then 
$${\it (1)} {\hskip 3.5cm} A {\widehat\otimes}_K A \cong A \otimes K[[X_1, \ldots, X_n]]. {\hskip 3.5cm}$$

\noindent
(2) The map $A {\otimes}_K A \rightarrow A \otimes K[[X_1, \ldots, X_n]]$, given by
$$g \otimes f \mapsto \sum_{s \in \Z^n_+} \frac{1}{s!} \, g \, \frac{\del^s f}{\del x^s}\, X^s,$$
extends to the completion and yields the above isomorphism of commutative algebras.

\noindent
(3) Under this isomorphism, the image of $ \delta(x_i) = 1 \otimes x_i - x_i \otimes 1$ is $X_i$.
\end{lemma}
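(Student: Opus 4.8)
The strategy is to first establish part (1) by identifying an explicit $A$-algebra isomorphism, then verify (2) by checking that the stated formula is well-defined, continuous, and inverse to a natural map, and finally (3) by direct substitution. I would begin by recording what the $\Delta$-adic filtration looks like in an \'etale chart. By Lemma \ref{Mumford}(1), $V$ is free over $A$ on $\pd{x_1}, \ldots, \pd{x_n}$, and the elements $\delta(x_i) = 1 \otimes x_i - x_i \otimes 1 \in \Delta$ play the role of local coordinates on the formal neighborhood of the diagonal. The first key step is to show that $\Delta$ is generated, as an ideal in $A \otimes_K A$, by $\delta(x_1), \ldots, \delta(x_n)$: this uses condition (2) in the definition of an \'etale chart (every $f \in A$ is algebraic over $K[x_1,\ldots,x_n]$) together with \'etaleness from Lemma \ref{Mumford}(4), so that for any $f \in A$ one has $1 \otimes f - f \otimes 1 \equiv \sum_i (\partial f/\partial x_i)\,\delta(x_i) \pmod{\Delta^2}$, and more generally $\Delta/\Delta^2$ is free over $A$ on the classes of the $\delta(x_i)$.

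The second step is to promote this to the completion. I would define an $A$-algebra homomorphism $A[[X_1,\ldots,X_n]] \to A\,\widehat\otimes_K A$ by $X_i \mapsto \delta(x_i)$; this is well-defined on the completion because each $\delta(x_i) \in \Delta$, so monomials of large degree in the $X_i$ land in high powers of $\Delta$. Conversely, the formula in part (2), $g \otimes f \mapsto \sum_{s} \frac{1}{s!} g\,(\partial^s f/\partial x^s)\,X^s$, defines an $A$-algebra map $A \otimes_K A \to A[[X_1,\ldots,X_n]]$ — here one checks multiplicativity using the Leibniz/Faà di Bruno identity $\frac{1}{s!}\partial^s(f_1 f_2) = \sum_{p+q=s} \frac{1}{p!}\partial^p f_1 \cdot \frac{1}{q!}\partial^q f_2$, which is exactly the statement that the exponential generating series $\sum_s \frac{1}{s!}(\partial^s f)X^s$ is multiplicative in $f$. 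This map kills $\Delta^m$ modulo degree-$m$ terms (since $\delta(x_i) \mapsto X_i + (\text{higher order})$, or in fact exactly $X_i$, as part (3) will show), hence extends to the completion $A\,\widehat\otimes_K A \to A[[X_1,\ldots,X_n]]$. The two maps are mutually inverse: composing in one order and evaluating on $X_i$ sends $X_i \mapsto \delta(x_i) \mapsto \sum_s \frac{1}{s!}(\partial^s x_i)X^s = X_i$, using $\partial^s x_i = \delta_{s,\epsilon_i}$; composing in the other order is then forced by the fact that both sides are complete $A$-algebras topologically generated by the images of the $X_i$ (equivalently, by $\Delta$), together with the graded isomorphism on $\Delta^m/\Delta^{m+1}$ from the first step.

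For part (3), the image of $\delta(x_i) = 1 \otimes x_i - x_i \otimes 1$ under the map of part (2) is $\sum_s \frac{1}{s!}(\partial^s x_i)X^s = X_i$, since $\partial^s x_i$ vanishes unless $s = \epsilon_i$, in which case it equals $1$; this is immediate once (2) is in hand, and it is also what pins down the inverse map above. The main obstacle I anticipate is the second step — verifying that the explicit formula in (2) is multiplicative and genuinely extends to the completion, i.e. that it is continuous for the $\Delta$-adic topology on the source and the $X$-adic topology on the target. This amounts to showing $\delta(x_{i_1})\cdots\delta(x_{i_m}) \in \Delta^m$ maps into the ideal $(X_1,\ldots,X_n)^m$, which follows once one knows the leading term of the image of $\delta(x_i)$ is $X_i$; so the continuity and the identification in (3) are really the same computation, and the algebraic-independence and \'etaleness hypotheses are what guarantee there is no obstruction to inverting. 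The freeness statements from Lemma \ref{Mumford} make the bookkeeping on associated graded pieces routine rather than delicate.
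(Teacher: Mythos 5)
The paper itself gives no proof of Lemma \ref{locjet}; it is quoted from \cite{BI}. Judged on its own merits, your argument is the standard one and is correct in substance: the explicit map $g\otimes f\mapsto g(x)f(x+X)$ is multiplicative by Leibniz, sends $\Delta$ into the $(X_1,\dots,X_n)$-adic ideal (because specializing $X=0$ recovers the multiplication map), hence is continuous and extends to the completion, and the composite with $X_i\mapsto\delta(x_i)$ is checked to be the identity via $\Delta/\Delta^2\cong\Omega^1$ and the associated graded. Two points deserve correction or emphasis. First, your "first key step" as literally stated -- that $\Delta$ is generated as an ideal of $A\otimes_K A$ by $\delta(x_1),\dots,\delta(x_n)$ -- is false for a general \'etale chart: take $A=K[y,y^{-1}]$ with uniformizing parameter $x=y^2$, so $A\otimes_K A=K[y^{\pm1},z^{\pm1}]$, $\Delta=(z-y)$, but $\delta(x)=(z-y)(z+y)$ generates a strictly smaller ideal since $z+y$ is not a unit. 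What is true, and what your argument actually uses, is the statement modulo powers of $\Delta$: from $\Delta=(\delta(x_1),\dots,\delta(x_n))+\Delta^2$ one gets generation in every quotient $(A\otimes_K A)/\Delta^m$, hence topological generation in the completion, and that is all the surjectivity argument needs. Second, the step "both composites are the identity" in the direction $A\,\widehat\otimes_K A\to A\,\widehat\otimes_K A$ is exactly the Taylor formula $1\otimes f=\sum_s\frac1{s!}\bigl(\frac{\partial^s f}{\partial x^s}\otimes1\bigr)\delta(x)^s$ recorded in the paper right after the lemma; your alternative via the isomorphism on associated graded pieces is fine but silently uses $\operatorname{gr}_\Delta(A\otimes_K A)\cong \operatorname{Sym}_A\Omega^1$, i.e.\ smoothness of $X$ (regularity of the diagonal), which should be said explicitly. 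Finally, note that the target is really the completed object $A[[X_1,\dots,X_n]]$ (the paper's notation $A\otimes K[[X_1,\dots,X_n]]$ is to be read in that completed sense, since already $1\otimes f$ for $f$ merely algebraic over $K[x]$ has coefficients spanning an infinite-dimensional subspace of $A$); you implicitly handle this correctly.
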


Throughout this paper, we will use the following convention: when we write $f(x + X)$, we will understand the Taylor expansion in the second summand, that is,
$$f (x + X) = \sum_{s \in \Z^n_+} \frac{1}{s!}\, \frac{\del^s f}{\del x^s}\, X^s.$$
Thus, the map in (2) above, can be written as $g\otimes f \mapsto g(x) f(x + X)$. 

The following Taylor formula holds in  $A {\widehat\otimes}_K A$ \cite{BI}:
$$1 \otimes f = \sum_{s \in \Z^n_+} \frac{1}{s!} \left( \frac{\del^s f}{\del x^s} \otimes 1 \right) \delta(x)^s.$$
Under the isomorphism (1) in the above Lemma, this simply reads
$$1 \otimes f = f(x + X) = \sum_{s \in \Z^n_+} \frac{1}{s!} \frac{\del^s f}{\del x^s}\, X^s,$$

Now let us glue the local construction of Lemma \ref{locjet}  into the jet bundle. The bundle of jets of functions $\J$ is a bundle of commutative algebras with fiber $K[[X_1, \ldots, X_n]]$.
In each \'etale chart $\{ U, (x_1, \ldots, x_n) \}$ it trivializes as
$$\J (U) = \OO(U) \otimes K[[X_1, \ldots, X_n]].$$
Consider now two \'etale charts $\{ U_1, (x_1, \ldots, x_n) \}$, $\{ U_2, (y_1, \ldots, y_n) \}$ with the coordinate change on $U_1 \cap U_2$ given by
\begin{equation}
\label{coord}
x_i = G_i(y_1, \ldots, y_N), \ y_j = H_j (x_1, \ldots, x_N).
\end{equation}
Since each $y_j$ is only algebraic over $K[x_1, \ldots, x_n]$, and vice versa, we treat $G_i$'s and $H_j$'s as implicit functions. What is important for us is that their partial derivatives are well-defined.
Let $\J(U_1) = \OO(U_1) \otimes K[[X_1, \ldots, X_n]]$, $\J(U_2) = \OO(U_2) \otimes K[[Y_1, \ldots, Y_n]]$.
Then the transformation law for the sections of the jet bundle is:
$$X_i \mapsto G_i (y + Y) - G_i (y).$$

To justify this transformation law, we can perform the following computation:
$$X_i = \delta(x_i) = 1 \otimes x_i - x_i \otimes 1 = 1 \otimes G_i (y) - G_i(y) \otimes 1 = G_i (y+Y) - G_i (y).$$
It follows that this transformation law is compatible with the compositions of coordinate transformations, and hence invertible,
with the inverse $Y_j \mapsto H_j (x + X) - H_j (x)$.

Identifying variables $Y_i$ with $X_i$, we may view the above transformations as automorphisms of $K[[X_1, \ldots, X_n]]$
with coefficients in $\OO(U_1 \cap U_2)$.

This defines a map from the groupoid of coordinate transformations on $X$ to the bundle $\Aut \J$.

Over an affine open set $U$, the Lie algebra of jets of vector fields $JV$ is defined as a completion of $A \# V$:
$$ A \hh V =  A {\widehat\otimes}_K A \otimes_A V.$$
It is easy to check that $\Delta^m \otimes_A V$ are ideals in $A \# V$, thus this completion has a well-defined Lie bracket.
Taking the tensor product of sheaves $\JV = \J \otimes \Theta$, we obtain the sheaf of jets of vector fields.

We point out that $N$-differentiable $AV$-modules are precisely those annihilated by $\Delta^N \otimes_A V$. Thus any differentiable $AV$-module on an affine variety admits the action of the jets of vector fields.

The map $A \# V \rightarrow V$, given by multiplication, $g \# \eta \mapsto g\eta$, extends to the completion, since $\Delta \otimes_A V$ is in the kernel. Thus we have the map $ A \hh V \rightarrow V$, called the {\it anchor} map.
We will be particularly interested in the kernel of the anchor map.

Locally, in an \'etale chart, we define the completion of $A \# U(V)$ as
$$A \hh U(V) = \Us (A, A \hh V).$$
This glues into a quasicoherent sheaf $\AV$ of associative algebras. See \cite{BI} for details.

We define a sheaf of $AV$-modules as a sheaf of modules over the sheaf $\AV$.

%
%
%
%

\section{General construction of sheaves of $AV$-modules}

The key to understanding the structure of $AV$-modules is the following realization of the jets of vector fields and associative algebras $A \hh U(V)$ in \'etale charts:
\begin{theorem} [\cite{BI}] 
\label{iso}
Let $U$ be an \'etale chart with uniformizing parameters $(x_1, \ldots, x_n)$. Let $A$ be the algebra of polynomial functions on $U$, $V = \Der (A)$, and $D$ be the algebra of differential operators on $U$. Then 
$$A \hh V  \cong V \ltimes (A \otimes \hL_+),$$
and
$$A \hh U(V)  \cong D \otimes U (\hL_+).$$
\end{theorem}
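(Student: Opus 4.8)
The plan is to exhibit explicit mutually inverse maps between the two sides, building on the description of jets from the previous section. The first isomorphism $A\hh V \cong V \ltimes (A \otimes \hL_+)$ is the geometric heart of the matter; the second then follows by passing to enveloping algebras and comparing relations.

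First I would treat $A \hh V \cong V \ltimes (A \otimes \hL_+)$. Using Lemma \ref{locjet}, we have $A \hh V = (A\otimes K[[X_1,\ldots,X_n]])\otimes_A V = \bigoplus_{i=1}^n K[[X_1,\ldots,X_n]]\otimes A \pd{x_i}$ as an $A$-module, with the anchor map $A\hh V \to V$ given by $X_i \mapsto 0$, i.e.\ by setting all $X_j=0$. The kernel of the anchor is exactly $\bigoplus_i \m\, K[[X]]\otimes A\pd{x_i}$ at the level of $A$-modules, and via the identification $X_i = \delta(x_i)$ and the Taylor formula, a general element $g\otimes f \otimes \pd{x_i}$ maps to the jet $g(x) f(x+X)\pd{x_i}$. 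The idea is that the anchor map splits: $V = \bigoplus A\pd{x_i}$ embeds into $A\hh V$ by $\eta \mapsto \eta$ understood as a "constant" jet, $\pd{x_i}\mapsto 1\otimes\pd{x_i}$ (equivalently $\eta(x+X)$ with no $X$-dependence beyond that forced by $A$-coefficients). So as an $A$-module $A\hh V = V \oplus \ker(\text{anchor})$, and the claim is that $\ker(\text{anchor}) \cong A\otimes \hL_+$ as a Lie algebra and $A$-module, where $\hL_+ = \bigoplus \m\pd{X_i}$, with $V$ acting on $A\otimes\hL_+$ by derivations on the $A$-factor, making the whole thing a semidirect product. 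I would verify the Lie bracket formula $[f\#\eta,g\#\mu] = f\eta(g)\#\mu - g\mu(f)\#\eta + fg\#[\eta,\mu]$ (recorded in Section 2) matches the semidirect-product bracket under this decomposition: the key computation is that for $f(x+X), g(x+X)$ "translation-type" jets the bracket of virtual jets $\sum a_s(x)X^s\pd{X_i}$ is again virtual and closes to the bracket in $A\otimes\hL_+$, while bracketing a constant jet $\pd{x_k}$ against a virtual jet applies $\pd{x_k}$ to the $A$-coefficients — this is precisely the $V$-action making it a semidirect product.

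For the second isomorphism, I would first establish the map $D\otimes U(\hL_+)\to A\hh U(V)$. Recall $A\hh U(V) = \Us(A, A\hh V)$, so by functoriality of enveloping algebras the inclusion of Lie-Rinehart pairs, together with the Lie algebra map $\hL_+ \hookrightarrow A\otimes\hL_+ \hookrightarrow A\hh V$, induces an algebra map $U(\hL_+)\to A\hh U(V)$; and $D = \Us(A,V) \to \Us(A,A\hh V) = A\hh U(V)$ comes from the split inclusion $V\hookrightarrow A\hh V$. Since $\hL_+$ commutes appropriately with nothing a priori, I would check that the images of $D$ and $U(\hL_+)$ generate and that the multiplication map $D\otimes_K U(\hL_+)\to A\hh U(V)$ is well-defined — this uses that in $A\hh U(V)$ the strong relation $f\#\eta = f\eta$ holds for $f\in A,\eta\in V$ (killing the Leibniz cross-terms between $A$ and the $V$-part) so that $D$ is a genuine subalgebra, while $\hL_+$ being $A$-linearly spanned by $X^s\pd{X_i}$ with $|s|\geq 1$ means these elements, having zero anchor, commute with $A\subset D$ in $A\hh U(V)$; the mixed relation $[\pd{x_k},\ \text{virtual jet}]$ is absorbed because $\pd{x_k}\in D$ and the bracket lands in $A\otimes\hL_+$, whose $A$-coefficients can be moved to the left into $D$. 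A dimension/PBW count then upgrades the surjection to an isomorphism: both sides are free over $A$ with basis $\{\partial^k \cdot (\text{PBW monomials in } X^s\pd{X_i})\}$, which follows from Theorem \ref{iso}'s first part together with the PBW theorem for $U(\hL_+)$ and the local structure of $D$ from Lemma \ref{Mumford}(2).

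The main obstacle I expect is the bookkeeping in the first isomorphism: carefully identifying the $V$-action on $A\otimes \hL_+$ and checking that the bracket in $A\hh V$, transported through the jet identification $g\otimes f\mapsto g(x)f(x+X)$, really does decompose as the advertised semidirect product rather than some twisted version — in particular tracking where the "non-linear in the second factor" feature of the coproduct (noted in Section 3) enters, since composition of virtual jets is the group law in $\NN$ and its linearization is the $\hL_+$-bracket. Once that decomposition is pinned down, the passage to enveloping algebras is largely formal, with the only subtlety being the verification that imposing the strong relation $f\#\eta = f\eta$ on the $V$-part does not disturb the $\hL_+$-part, which holds because virtual jets have vanishing anchor and hence the strong relation is vacuous on them.
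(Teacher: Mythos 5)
Your proposal is correct and follows essentially the same route as the paper, which (citing \cite{BI}) records the explicit mutually inverse maps $\varphi\bigl( g \# f \pd{x_i} \bigr) = gf \pd{x_i} + g(x)(f(x+X)-f(x))\pd{X_i}$ and $\psi$ --- i.e.\ exactly your splitting of the anchor sequence with kernel $A \otimes \hL_+$ --- and then obtains the second isomorphism by taking strong enveloping algebras of both sides, as you do. Your worry about a ``twisted'' bracket is not an issue here: the nonlinearity of the coproduct only enters in the gluing between charts, not in the local isomorphism within a single \'etale chart.
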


The isomorphism between $A \hh V$ and $V \ltimes (A \otimes \hL_+)$ is given by the map
$$\varphi \left( g \# f \pd{x_i} \right) = gf \pd{x_i} + g(x)(f(x + X) - f(x)) \pd{X_i}.$$
Here when we write $f(x+X)$, we understand the Taylor expansion in the second argument, thus the above formula reads
$$\varphi \left( g \# f \pd{x_i} \right) = gf \pd{x_i} + \sum\limits_{k \in \Z_+^n \backslash \{ 0 \} } 
\frac{1}{k!} g \frac{\partial^k f}{\partial x^k} \otimes X^k  \pd{X_i}.$$
The inverse map is 
$$\psi \left( f \pd{x_i} \right) = f \# \pd{x_i},$$
$$\psi \left( g \otimes X^m \pd{X_i} \right) = (g \otimes 1) (1 \otimes x - x \otimes 1)^m \pd{x_i},$$
and extended to completions by continuity. Note that maps $\varphi$ and $\psi$ are homomorphisms of left $A$-modules.

The second part of the above theorem follows from its first claim by taking the strong enveloping algebras of both sides.

Note that in the present paper we use a different choice of signs from \cite{BIN, BI} when describing isomorphism $\psi$.

Define the sheaf of {\it virtual} jets of vector fields $\Lp$ as the kernel of the anchor map $\JV \rightarrow \V$.

\begin{corollary}
\label{keranchor}
Over an \'etale chart $U$, virtual jets of vector fields are realized as 
$$ \Lp (U) = A \otimes \hL_+ .$$
\end{corollary}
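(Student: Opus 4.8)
The plan is to read off the statement directly from Theorem \ref{iso} by tracing through the isomorphism $\varphi$ and identifying the anchor map on the left-hand side with an explicit map on $V \ltimes (A \otimes \hL_+)$. First I would recall that the anchor map $\JV \to \V$ is, over an affine open set, the extension to the completion of the multiplication map $A \# V \to V$, $g \# \eta \mapsto g\eta$. Under the isomorphism $\varphi : A \hh V \xrightarrow{\sim} V \ltimes (A \otimes \hL_+)$, I claim the anchor map becomes the obvious projection $V \ltimes (A \otimes \hL_+) \to V$ onto the first summand. To check this, it suffices to verify it on the generators $g \# f \,\partial/\partial x_i$, since $\varphi$ is an isomorphism of left $A$-modules and both maps are continuous. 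We have $\varphi(g \# f\,\partial/\partial x_i) = gf\,\partial/\partial x_i + g(x)(f(x+X) - f(x))\,\partial/\partial X_i$; the first term lies in $V$ and equals the image under the anchor (multiplication) map, while the second term lies in $A \otimes \hL_+$ (note $f(x+X) - f(x)$ has no constant term in $X$, so indeed lands in $A \otimes \m\,\partial/\partial X_i$). Hence $\varphi$ intertwines the anchor with the projection, and the kernel of the anchor corresponds under $\varphi$ to the kernel of the projection, which is exactly the second summand $A \otimes \hL_+$.

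The only point requiring a small argument is that $V \ltimes (A \otimes \hL_+)$ is genuinely a \emph{direct sum} of left $A$-modules with $V$ mapping isomorphically onto the quotient $\JV(U)/\Lp(U)$, so that $\Lp(U) = \ker(\text{anchor})$ really is all of $A \otimes \hL_+$ and nothing more. This is immediate from the semidirect product structure in Theorem \ref{iso}: as a left $A$-module, $A \hh V \cong V \oplus (A \otimes \hL_+)$, and the anchor, being $A$-linear and surjective onto $V = \V(U)$ with the first summand mapping isomorphically, has kernel precisely the complementary summand $A \otimes \hL_+$. One should also note that $A \otimes \hL_+$ is closed under the Lie bracket of $\JV(U)$ — indeed it is an ideal in the semidirect product — so it is a sheaf of Lie algebras as claimed implicitly by the name ``virtual jets of vector fields.''

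I do not anticipate a genuine obstacle here: the corollary is essentially a bookkeeping consequence of the explicit formula for $\varphi$ combined with the definition of the anchor map as the completion of multiplication. The one thing to be careful about is matching conventions — checking that ``evaluates to a zero vector field under the anchor'' coincides with ``the $V$-component vanishes'' under the particular isomorphism $\varphi$ chosen in this paper (recall the footnote about a sign change from \cite{BIN, BI}). Since $\varphi$ and the anchor are both left $A$-linear and continuous, it is enough to check the identification on the generators $g \# f\,\partial/\partial x_i$ as above, and then extend by $A$-linearity and continuity to all of $A \hh V$. The gluing into a sheaf is already handled by the construction of $\AV$ and $\JV$ in the previous section, so the local statement over each \'etale chart is all that is needed.
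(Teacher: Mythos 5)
Your proposal is correct and matches the paper's (implicit) argument: the corollary is stated without proof as an immediate consequence of Theorem \ref{iso}, and the intended reasoning is exactly what you wrote — under $\varphi$ the anchor map becomes the projection of $V \ltimes (A \otimes \hL_+)$ onto $V$, so its kernel is the complementary summand $A \otimes \hL_+$. Your additional checks (that $f(x+X)-f(x)$ has no constant term, and that $A \otimes \hL_+$ is an ideal) are accurate and fill in the bookkeeping the paper leaves to the reader.
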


Let $\g$ be a Lie algebra.

\begin{definition}
A $\g$-bundle on $X$ is a sheaf $\F$ of Lie algebras on $X$ such that there exists a cover of $X$ by an atlas of affine open sets, 
where for each open set $U$ in this atlas, $\F(U)$ is isomorphic to $\OO(U) \otimes \g$ as Lie algebra in the category of $\OO(U)$-modules.
\end{definition}

\begin{example} If we have a principal $G$-bundle on $X$, where $G$ integrates $\g$, then there is an associated \emph{adjoint} $\g$-bundle. Identifying a principal $GL_{r}$ bundle with a rank $r$ locally free sheaf $\F$, the corresponding adjoint bundle is $\operatorname{End}(\F)$, with the evident Lie algebra structure. The trivial $G$-bundle induces the $\g$-bundle $\OO\otimes\g$, which can also be defined without the assumption of a group $G$ integrating $\g$. \end{example}

The sheaf $\V$ of vector fields is a sheaf of Lie algebras, but it is not a $\g$-bundle since the Lie bracket of vector fields is not $\OO$-linear. For the same reason,  the jets of vector fields $\JV$ is not a $\g$-bundle either.

However, it follows from Corollary \ref{keranchor} that the sheaf $\Lp$ of virtual jets of vector fields is an $\hL_+$-bundle. 
This bundle will play an important role in the present paper.

The coordinate transformation law for this bundle was given in \cite{BI}. Consider two \'etale charts: 
$\left\{ U_1, (x_1, \ldots, x_n) \right\}$ and $\left\{ U_2, (y_1, \ldots, y_n) \right\}$. 
Suppose on the intersection $U_1 \cap U_2$ the change of coordinates is given by (\ref{coord}).



Let $\hL^X_+$ and $\hL^Y_+$ be subalgebras of derivations of $K[[X_1, \ldots, X_n]]$ and $K[[Y_1, \ldots, Y_n]]$ respectively,
defined as above.

On the intersection $U = U_1 \cap U_2$ we have the maps
\begin{equation}
\label{localL}
\OO( U_1 \cap U_2) \otimes \hL^Y_+ \ \mathop\leftrightarrows\limits_{\psi_2}^{\varphi_2} \ \Lp (U) \ 
\mathop\rightleftarrows\limits_{\psi_1}^{\varphi_1} \ \OO( U_1 \cap U_2) \otimes \hL^X_+.
\end{equation}

Applying the composition of maps $\varphi_2 \circ \psi_1$, we get coordinate the transformation law for the sheaf $\Lp$:
\begin{align}
\label{Ltrans}
g(X) \dd{X_p} &= g( 1\otimes x - x \otimes 1 ) 1 \otimes \pp{}{x_p} \nonumber\\ 
&= g( 1\otimes G(y) - G(y) \otimes 1 ) 1 \otimes \pp{H_q}{x_p} (G(y)) \pp{}{y_q} \nonumber\\ 
&= g\left( G(y+Y) - G(y) \right) \frac{\partial H_q}{\partial x_p} (G(y+Y)) \dd{Y_q},
\end{align}
where $g$ is a power series in $K[[X_1, \ldots, X_n]]$ without a constant term. 

 
 Likewise, we have isomorphisms of associative algebras
\begin{equation}
\label{localAV}
D \otimes U(\hL^Y_+) \ \mathop\leftrightarrows\limits_{\psi_2}^{\varphi_2} \ \AV (U) \ 
\mathop\rightleftarrows\limits_{\psi_1}^{\varphi_1} \ D \otimes U(\hL^X_+).
\end{equation}

Again, considering the composition $\varphi_2 \circ \psi_1$, we get a homomorphism $D \rightarrow D \otimes U(\hL^Y_+)$,
given on the generators by the formula:
\begin{equation}
\label{dtrans}
\frac{\partial}{\partial x_i} \mapsto 
\sum\limits_{j=1}^n \frac{\partial H_j}{\partial x_i} (G(y)) \frac{\partial}{\partial y_j}
+ \left(   \frac{\partial H_j}{\partial x_i} (G(y+Y)) -  \frac{\partial H_j}{\partial x_i} (G(y)) \right) \frac{\partial}{\partial Y_j}.
\end{equation}
  
 For a $\g$-bundle $\F$, an $\F$-module is a sheaf $\M$ on $X$ with an $\OO$-linear Lie algebra morphism of sheaves $\F \rightarrow \End \M$.

%

Let $\F$ be a $\Lp$-module on $X$. We can use the action of $\Lp$ to define a charged $\D$-module structure on $\F$.

\begin{definition}
\label{deform}
 We call an $\Lp$-module $\F$ an $\Lp$-charged $\D$-module if for each \'etale chart $U$ with uniformizing parameters 
$(x_1, \ldots, x_n)$ we have a $\D(U)$-module structure on $\F(U)$ such that

(1) The actions of $\D(U)$ and $\Lp(U)$ are compatible in the following way:
$$\left[ \pd{x_i} , f \otimes g(X) \pd{X_j} \right] = \frac{\partial f}{\partial x_i} \otimes g(X) \pd{X_j},$$
$$ f_1 \left( f_2  \otimes g(X) \pd{X_j} \right) = f_1 f_2  \otimes g(X) \pd{X_j} .$$

\

(2) On the intersection of two \'etale charts $U_1$, $U_2$ 
the coordinate transformation for the action of differential operators is given by (\ref{dtrans}).
\end{definition}

\

\begin{lemma}
\label{inv}
 Condition 
$$\left[ \pd{x_i} , \ f \otimes g(X) \pd{X_j} \right] = \frac{\partial f}{\partial x_i} \otimes g(X) \pd{X_j},$$
is invariant under the coordinate change.
\end{lemma}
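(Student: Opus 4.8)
The plan is to verify that the compatibility relation
$$\left[ \pd{x_i}, \ f \otimes g(X) \pd{X_j} \right] = \frac{\partial f}{\partial x_i} \otimes g(X) \pd{X_j}$$
transforms correctly when we pass from the chart $\{U_1, (x_1,\ldots,x_n)\}$ to the chart $\{U_2, (y_1,\ldots,y_n)\}$. Concretely, I would take an arbitrary element of $\Lp(U_1\cap U_2)$ written in the $X$-coordinates as $g(X)\dd{X_p}$ (the general element being an $\OO$-linear combination of these), use the transformation law (\ref{Ltrans}) to rewrite it in the $Y$-coordinates, use the transformation law (\ref{dtrans}) to rewrite $\pd{x_i}$ as a differential operator plus a virtual-jet correction term in the $Y,\del/\del Y$ variables, and then compute the bracket directly in the $Y$-chart using the relation that we are assuming holds there. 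The goal is to check that the result agrees with what one obtains by first computing the bracket in the $X$-chart and then transporting the answer $\frac{\partial f}{\partial x_i}\otimes g(X)\dd{X_p}$ to the $Y$-chart via (\ref{Ltrans}) again — noting that $\frac{\partial}{\partial x_i}$ acting on $f$, and $\frac{\partial}{\partial x_i}$ as part of the differential-operator piece, must be related through the chain rule.

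The key steps, in order: (1) Expand $\pd{x_i}$ under (\ref{dtrans}) as $D$-part $\;\frac{\partial H_k}{\partial x_i}(G(y))\pd{y_k}\;$ plus $\Lp$-part $\;\bigl(\frac{\partial H_k}{\partial x_i}(G(y+Y)) - \frac{\partial H_k}{\partial x_i}(G(y))\bigr)\dd{Y_k}$. (2) Observe that the bracket of the $\Lp$-part of $\pd{x_i}$ with any element of $\Lp(U_1\cap U_2)$ lands in $\Lp$ and is irrelevant to checking the \emph{first} compatibility identity in the $Y$-chart: what matters is that brackets with the $D$-part reproduce the required form, while the second identity in Definition \ref{deform} (that $\OO(U)$ acts by plain multiplication) is manifestly coordinate-independent and lets us absorb/normalize the function coefficients. (3) Using the $Y$-chart version of the relation, $[\pd{y_k}, h\otimes \tilde g(Y)\dd{Y_q}] = \frac{\partial h}{\partial y_k}\otimes \tilde g(Y)\dd{Y_q}$, together with the Leibniz rule, compute $[\pd{x_i}, f\otimes g(X)\dd{X_p}]$ where $f\otimes g(X)\dd{X_p}$ is first re-expressed via (\ref{Ltrans}) as $\sum h_{q}(y,Y)\dd{Y_q}$ with $h_q$ built from $f$, the implicit functions $G,H$, and their derivatives evaluated at $G(y+Y)$. (4) Compare with the transported right-hand side $\frac{\partial f}{\partial x_i}\otimes g(X)\dd{X_p}$: here the chain rule $\frac{\partial f}{\partial x_i} = \frac{\partial H_k}{\partial x_i}(G(y))\frac{\partial f}{\partial y_k}$ must exactly account for the difference, since the $Y$-coefficients $h_q$ depend on $y$ only through $f(G(y))$ and through the jacobian factors $\frac{\partial H_q}{\partial x_p}(G(y+Y))$, and only the $f$-dependence is differentiated by the $D$-part of $\pd{x_i}$ thanks to step (3).

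I expect the main obstacle to be bookkeeping: the transformation law (\ref{Ltrans}) produces coefficients that are power series in $Y$ with argument $G(y+Y)$, so applying $\pd{y_k}$ via the Leibniz rule generates several terms — one hitting the explicit $f(G(y+Y))$ factor, one hitting the jacobian factors $\frac{\partial H_q}{\partial x_p}(G(y+Y))$, and one hitting the argument-shift $G(y+Y)-G(y)$ inside $g$. The crux is that the terms arising from differentiating the jacobian factors and the argument inside $g$ are precisely compensated either by the $\Lp$-part of $\pd{x_i}$ under (\ref{dtrans}) (which, being a bracket of two elements of $\Lp$, is $\OO$-linear and is accounted for by the $\OO$-linearity of the whole identity), or by the fact that the very definition of the transformation law (\ref{Ltrans}) was engineered to be the composition $\varphi_2\circ\psi_1$ of $A$-module maps, so it intertwines the two $D$-actions by construction. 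In other words, the cleanest route is to observe that both sides of the claimed identity are images under the chart-gluing isomorphism of the corresponding identity in the fixed algebra $\AV(U_1\cap U_2)$ — where it is simply the defining commutation relation of Theorem \ref{iso} between the $D$-part and the $\Lp$-part — and the gluing maps $\varphi,\psi$ are algebra isomorphisms, hence preserve that relation; the explicit computation above then just unwinds this. I would present the conceptual argument first and relegate the explicit chain-rule verification to a short remark.
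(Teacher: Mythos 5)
Your overall strategy---transport $\pd{x_i}$ via (\ref{dtrans}) and the virtual jet via (\ref{Ltrans}), compute the bracket in the $y$-chart using the hypothesis there, and compare with the transported right-hand side---is exactly the paper's. But two of your intermediate claims are false and would derail the computation. In step (2) you declare the bracket of the $\Lp$-part of (\ref{dtrans}) with the transformed element ``irrelevant,'' and in step (4) you assert that the $D$-part of $\pd{x_i}$ differentiates only the $f$-dependence of the coefficients. Neither holds: the $y$-chart hypothesis $\left[\pd{y_j}, h\otimes P(Y)\pd{Y_q}\right]=\pp{h}{y_j}\otimes P(Y)\pd{Y_q}$ forces $\pd{y_j}$ to differentiate the \emph{entire} $\OO$-coefficient of the transformed element, which besides $f(G(y))$ includes the jacobian factor $\pp{H_q}{x_p}(G(y+Y))$ and the argument shift $G(y+Y)-G(y)$ inside $g$; this produces two extra nonzero terms. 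Those are cancelled precisely by the three terms coming from the bracket with the $\Lp$-part of (\ref{dtrans})---the part you discarded---and the cancellation is not formal: it requires the jacobian identities $\pp{H_q}{x_\ell}(G)\,\pp{G_\ell}{y_j}=\delta_{qj}$ evaluated at both $y$ and $y+Y$. Your ``obstacle'' paragraph tacitly concedes all this (contradicting step (2)) but then replaces the verification by an appeal to ``$\OO$-linearity of the whole identity,'' which is not a reason---the identity is not $\OO$-linear in $\pd{x_i}$. The five-term expansion and its cancellation is the actual content of the paper's proof (after first splitting off $f$ and reducing to $\left[\pd{x_i},\,1\otimes g(X)\pd{X_p}\right]=0$), and your writeup never performs it.

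Your closing ``conceptual'' argument is the salvageable part and, if completed, would be a cleaner route than the paper's: the identity does hold as a commutator identity in the algebra $D\otimes U(\hL^X_+)$, and the gluing map $\varphi_2\circ\psi_1$ of (\ref{localAV}) is an algebra isomorphism onto $D\otimes U(\hL^Y_+)$ carrying the three elements involved to exactly the operators by which they are decreed to act in the $y$-chart, so the transported identity holds in $D\otimes U(\hL^Y_+)$. What is missing is the bridge from an identity in the algebra to an identity of operators on $\F(U_1\cap U_2)$: you must first prove that the $y$-chart compatibility conditions make $\F(U_1\cap U_2)$ a module over the whole algebra $D\otimes U(\hL^Y_+)$, not merely over $\D$ and $\Lp$ separately. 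That step (essentially the local half of Theorem \ref{main}, which does not depend on Lemma \ref{inv}, so there is no circularity) is gestured at but never stated; without it an algebra identity says nothing about the module. Supply that lemma explicitly and either route closes.
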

\begin{proof}
Let $U_1$ and $U_2$ be two \'etale charts with the coordinate change as above. Let us assume that in $U_2$ the condition
$$\left[ \pd{y_j} , f \otimes h(Y) \pd{Y_k} \right] = \frac{\partial f}{\partial y_j} \otimes h(Y) \pd{Y_k}$$
holds for any power series $h \in K[[Y_1,\ldots, Y_n]]$ with a zero constant term. Let us prove the analogous relation in chart $U_1$. 
We have
\begin{align*}
&\left[ \pd{x_i} , \ f(x) \otimes g(X) \pd{X_p} \right] \\
&= \bigg[ \frac{\partial H_j}{\partial x_i} (G(y)) \frac{\partial}{\partial y_j}
+ \left(   \frac{\partial H_j}{\partial x_i} (G(y+Y)) -  \frac{\partial H_j}{\partial x_i} (G(y)) \right) \frac{\partial}{\partial Y_j}, \\
& {\hskip 1cm} f\left( G(y) \right) g\left( G(y+Y) - G(y) \right) \frac{\partial H_q}{\partial x_p} (G(y+Y))
\dd{Y_q} \bigg] \\
&= \frac{\del f}{\del x_i}  \otimes g(X) \pd{X_p} + f  \left[ \pd{x_i} , \, 1 \otimes g(X) \pd{X_p} \right].
\end{align*}
Thus it is sufficient to prove that 
\begin{equation*}
\left[ \pd{x_i} , \ 1 \otimes g(X) \pd{X_p} \right] = 0.
\end{equation*}
Let us establish this equality. 
\begin{align*}
& \bigg[ \frac{\partial H_j}{\partial x_i} (G(y)) \frac{\partial}{\partial y_j}
+ \left(   \frac{\partial H_j}{\partial x_i} (G(y+Y)) -  \frac{\partial H_j}{\partial x_i} (G(y)) \right) \frac{\partial}{\partial Y_j}, \\
& {\hskip 1cm} g\left( G(y+Y) - G(y) \right) \frac{\partial H_q}{\partial x_p} (G(y+Y))
\dd{Y_q} \bigg] \\
& = \frac{\partial H_j}{\partial x_i} (G(y)) \frac{\del g}{\del x_\ell} \left( G(y+Y) - G(y) \right) 
\left( \frac{\del G_\ell}{\del y_j} (y+Y) - \frac{\del G_\ell}{\del y_j} (y) \right) \\
& {\hskip 7cm} \times \frac{\partial H_q}{\partial x_p} (G(y+Y))
\dd{Y_q} \\
&+ \frac{\partial H_j}{\partial x_i} (G(y)) g\left( G(y+Y) - G(y) \right)  \frac{\del^2 H_q}{\del x_p \del x_\ell} (G(y+Y))
\frac{\del G_\ell}{\del y_j} (y+Y)
\dd{Y_q} \\
&+ \left(   \frac{\partial H_j}{\partial x_i} (G(y+Y)) -  \frac{\partial H_j}{\partial x_i} (G(y)) \right)
 \frac{\del g}{\del x_\ell} \left( G(y+Y) - G(y) \right)  \\
&  {\hskip 6cm} \times \frac{\del G_\ell}{\del y_j} (y+Y) \frac{\partial H_q}{\partial x_p} (G(y+Y)) \dd{Y_q} \\
&+  \left(   \frac{\partial H_j}{\partial x_i} (G(y+Y)) -  \frac{\partial H_j}{\partial x_i} (G(y)) \right)
g\left( G(y+Y) - G(y) \right) \\
& {\hskip 6cm} \times \frac{\del^2 H_q}{\del x_p \del x_\ell} (G(y+Y))
\frac{\del G_\ell}{\del y_j} (y+Y)
\dd{Y_q} \\
&- g\left( G(y+Y) - G(y) \right) \frac{\partial H_q}{\partial x_p} (G(y+Y)) \\
& {\hskip 5cm}  \times \frac{\del^2 H_j}{\del x_i \del x_\ell} (G(y+Y)) \frac{\del G_\ell}{\del y_q} (y+Y)
\dd{Y_j}. \\
\end{align*}
Since $H$ and $G$ are inverses of each other, we have 
\begin{equation*}
\label{jac}
\frac{\partial H_j}{\partial x_i} (G(y)) \frac{\del G_\ell}{\del y_q} (y) = \delta_{i \ell}
\text{\ and \ }
\frac{\partial H_j}{\partial x_i} (G(y+Y)) \frac{\del G_\ell}{\del y_q} (y+Y) = \delta_{i \ell} .
\end{equation*}
Applying these relations we will see that all terms cancel out and we get zero.
\end{proof}

Our main result is a consequence of local isomorphisms (\ref{localL}):
\begin{theorem}
\label{main}
 An $\Lp$-charged sheaf $\M$ of $\D$-modules has the structure of a sheaf of $AV$-modules with the following action 
of vector fields in an \'etale chart:

$$\rho\left( f \frac{\partial}{\partial x_i} \right)  = f \frac{\partial}{\partial x_i}  + ( f(x + X) - f(x) )\frac{\partial}{\partial X_i}.$$
\end{theorem}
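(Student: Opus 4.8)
The plan is to verify three things locally in an \'etale chart, and then check compatibility across charts. First, in a fixed chart $U$ with uniformizing parameters $(x_1,\ldots,x_n)$, I define the operator $\rho(f\pd{x_i})$ by the stated formula, using the given $\D(U)$-module action and the $\Lp(U) = \OO(U)\otimes\hL_+$ action (the term $(f(x+X)-f(x))\pd{X_i}$ lies in $\OO(U)\otimes\hL_+$ because the constant term in $X$ vanishes). I would then verify the Leibniz rule $\rho(g\pd{x_i})(hm) = g(h)m + h\,\rho(g\pd{x_i})(m)$ for $h\in\OO(U)$; this follows from the compatibility relations in Definition \ref{deform}(1): the $\pd{x_i}$ part contributes $g\pp{h}{x_i}m$ via the $\D$-module structure, and the $\Lp$ part is $\OO$-linear so contributes nothing to the derivative of $h$, while $h$ commutes out. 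Next, I would verify that $\rho$ is a Lie algebra homomorphism, i.e. $[\rho(f\pd{x_i}),\rho(g\pd{x_j})] = \rho([f\pd{x_i},g\pd{x_j}])$. The cleanest way is to observe that the formula for $\rho$ is \emph{exactly} the map $\varphi$ of Theorem \ref{iso} (extended to the associative algebra via $A\hh U(V)\cong D\otimes U(\hL_+)$): $\varphi(g\# f\pd{x_i}) = gf\pd{x_i} + g(x)(f(x+X)-f(x))\pd{X_i}$, and setting $g=1$ recovers precisely the displayed formula. Since $\varphi$ is an algebra isomorphism (hence in particular a Lie algebra homomorphism on $A\hh V \cong V\ltimes(A\otimes\hL_+)$), the bracket identity holds automatically, provided the $\D$-module together with the $\Lp$-module genuinely assemble into a module over $D\otimes U(\hL_+)$.

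So the real content is to check that the hypotheses — $\M(U)$ is a $\D(U)$-module, an $\Lp(U)$-module, and these two satisfy the compatibility (1) of Definition \ref{deform} — are equivalent to $\M(U)$ being a module over $A\hh U(V)\cong D\otimes U(\hL_+)$. One direction is Theorem \ref{iso} itself: a module over the right-hand side restricts to a $D$-module and a $U(\hL_+)$-module with the commutation relations coming from the algebra structure of $D\otimes U(\hL_+)$, and those relations are exactly (1) (the first relation is the $[D,\hL_+]$ commutator induced by $\varphi$, the second is $\OO(U)$-bilinearity of the tensor product $D\otimes U(\hL_+)$ over $\OO(U)$ — here one uses that $\varphi,\psi$ are left-$A$-module maps, as noted after Theorem \ref{iso}). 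Conversely, given separate module structures satisfying (1), one defines the action of a general element of $D\otimes U(\hL_+)$ by multiplying the two actions in the PBW-type order $D$ then $U(\hL_+)$, and (1) guarantees the relations needed for this to be well-defined and associative. I would spell this out as a short lemma (or fold it into the proof), invoking Theorem \ref{iso} to transport the module structure to $A\hh U(V)$ and thereby to $\AV(U)$.

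The remaining step is globalization: I must check that the locally-defined $AV$-module structures glue, i.e. that on $U_1\cap U_2$ the transition isomorphism for $\AV$ (given by $\varphi_2\circ\psi_1$ in (\ref{localAV})) intertwines the two locally-built module structures. By construction the $\D$-part transforms by (\ref{dtrans}), which is precisely the hypothesis in Definition \ref{deform}(2), and the $\Lp$-part transforms by (\ref{Ltrans}) since $\M$ is by assumption a genuine sheaf of $\Lp$-modules. Since $\varphi_2\circ\psi_1$ on $\AV(U)$ is the algebra isomorphism whose action on the generators $\pd{x_i}$ of $D$ is (\ref{dtrans}) and on $\OO(U)\otimes\hL_+$ is (\ref{Ltrans}), and since these are exactly the transformation laws our two module-pieces obey, the locally-constructed actions of $\AV$ agree on overlaps. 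Here Lemma \ref{inv} provides the needed consistency: it ensures the compatibility relation (1) — which we used to build the module structure in $U_1$ — is automatically the one induced from $U_2$, so there is no clash. Hence the local $AV$-module structures patch to a sheaf of $AV$-modules on all of $X$.

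The main obstacle, and where I would be most careful, is the well-definedness of the $D\otimes U(\hL_+)$-module structure from the three separate hypotheses — in particular checking that relation (1) of Definition \ref{deform} is \emph{exactly} the full set of commutation relations in $A\hh U(V)$ under the isomorphism $\varphi$ of Theorem \ref{iso}, with nothing missing. Concretely one must confirm: (a) $[\pd{x_i},\pd{x_j}]=0$ is automatic; (b) the bracket of two elements $f\otimes g(X)\pd{X_j}$, $f'\otimes g'(X)\pd{X_k}$ of $\Lp(U)$ is computed entirely within the $\hL_+$-module structure (the $\OO(U)$-factors are central for this bracket because $\varphi$ is $A$-linear), so no extra condition is needed; (c) the mixed bracket is governed solely by the first relation of (1). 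Granting this bookkeeping, everything else is a direct appeal to Theorem \ref{iso}, the definition of a charged $\D$-module, and Lemma \ref{inv}.
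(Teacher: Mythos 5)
Your proposal is correct and follows essentially the same route as the paper: identify $\AV(U)\cong \D(U)\otimes U(\hL_+)$ via Theorem \ref{iso}, observe that the charged-$\D$-module axioms are exactly the commutation relations needed for a module over this tensor product, and glue using the transformation laws (\ref{Ltrans}), (\ref{dtrans}) together with Lemma \ref{inv}. The paper's own proof is just a terser version of this; your expansion of why Definition \ref{deform}(1) accounts for all the relations (in particular that only the constant copy $1\otimes\hL_+$ need commute with $\D(U)$) is the same argument spelled out.
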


\begin{proof}
We need to show that $\M$ is a module over the sheaf $\AV$. Locally, in an \'etale chart $U$, \ $\AV(U) \cong \D(U) \otimes U(\hL_+)$.
By definition, $\M(U)$ admits commuting actions of $\OO(U) \otimes \hL_+$ and of $\D(U)$. Thus, it is a module for $\AV(U)$.
On the intersection $U_1 \cap U_2$ of two \'etale charts, the actions of $\AV(U_1)$ and $\AV(U_2)$ agree due to transformation laws (\ref{Ltrans}) and (\ref{dtrans}). 
\end{proof}

\section{Sheaves of jet modules}
\label{shjetmod}

In this section, we would like to generalize the sheaves of tensor modules and construct the sheaves of jet modules.

Fix a finite-dimensional representation $(W, \rho)$ for the Lie algebra $\hL_+$, for which the action of $L_0 \cong gl_n$ integrates to a rational $GL_n$-module. By Lemma \ref{Liecor}, the module $W$ admits the action of the group $\Aut K[[X_1, \ldots, X_n]]$, acting via its quotient $GL_n$. We will also denote this representation as $\rho$.

We would like to define the sheaf of $AV$-modules $\J^W$. Locally, in an \'etale chart $U$, this sheaf trivializes:
$$\J^W (U) = \OO(U) \otimes W.$$

Now consider two \'etale charts $\{ U_1, (x_1, \ldots, x_n) \}$, $\{ U_2, (y_1, \ldots, y_n) \}$ with the coordinate transformation
(\ref{coord}). Let $\varphi_G$ be the corresponding automorphism of $K[[X_1, \ldots, X_n]]$: 
$X_i \mapsto G_i (y + Y) - G_i (y)$.

The gluing transformation in the sheaf $\J^W$ is 
\begin{equation}
\label{gluing}
g(x) w \mapsto g(G(y)) \rho (\varphi_G) w \text{ \ for } w \in W.
\end{equation}

\begin{lemma}
\label{Lprop}
The vector bundle $\J^W$ is an $\Lp$-module.
\end{lemma}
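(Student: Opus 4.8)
The plan is to show that the sheaf $\J^W$, defined locally as $\OO(U)\otimes W$ with gluing transformations \eqref{gluing}, carries a well-defined $\Lp$-module structure. Since $\Lp$ is the $\hL_+$-bundle with $\Lp(U)=\OO(U)\otimes\hL_+$ in an \'etale chart (Corollary \ref{keranchor}), it suffices to define an $\OO(U)$-linear action of $\OO(U)\otimes\hL_+$ on $\OO(U)\otimes W$ locally and check that it is compatible with the gluing. Locally I would set
\begin{equation}
\label{Lact}
(f\otimes\eta)\cdot(g\otimes w) = fg\otimes\rho(\eta)w, \qquad f,g\in\OO(U),\ \eta\in\hL_+,\ w\in W,
\end{equation}
that is, the tensor product of the $\OO(U)$-module structure with the given $\hL_+$-action $\rho$ on $W$. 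This is manifestly $\OO(U)$-linear in $\Lp(U)$ and respects the Lie bracket of $\OO(U)\otimes\hL_+$ because $\hL_+$ acts $\OO(U)$-linearly: the bracket in $\Lp(U)=\OO(U)\otimes\hL_+$ is $[f\otimes\eta,g\otimes\mu]=fg\otimes[\eta,\mu]$ (there is no anchor contribution, precisely because these are virtual jets), so the check reduces to $\rho$ being a Lie algebra homomorphism $\hL_+\to\End W$.

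The substantive step is compatibility with the gluing. On $U=U_1\cap U_2$ a section of $\Lp$ is described in the $X$-coordinates via $\varphi_1$ and in the $Y$-coordinates via $\varphi_2$, related by the transformation law \eqref{Ltrans}; correspondingly a section of $\J^W$ is described by $g(x)w$ in the first chart and by $g(G(y))\rho(\varphi_G)w$ in the second. I must verify that acting by a local section $\xi$ of $\Lp(U)$ and then applying the gluing \eqref{gluing} gives the same result as first gluing and then acting by the $Y$-description of $\xi$. Writing $\xi$ in the first chart as $f(x)\otimes\zeta$ with $\zeta\in\hL^X_+$, the claim is the identity
$$
\rho(\varphi_G)\,\rho(\zeta)\,w = \rho\bigl(\mathrm{Ad}(\varphi_G)\zeta\bigr)\,\rho(\varphi_G)\,w,
$$
where $\mathrm{Ad}(\varphi_G)\zeta\in\hL^Y_+$ is exactly the derivation obtained by applying the composition $\varphi_2\circ\psi_1$ of \eqref{localL}, i.e. the right-hand side of \eqref{Ltrans}. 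But this is simply the infinitesimal form of the relation $\rho(F)\rho(\exp(t\zeta))\rho(F)^{-1}=\rho(F\exp(t\zeta)F^{-1})$ for $F=\varphi_G\in\Aut K[[X_1,\ldots,X_n]]$, which holds because $W$ is a $\Aut K[[X_1,\ldots,X_n]]$-module by Lemma \ref{Liecor} and the group action is compatible with the exponential map; differentiating at $t=0$ yields $\rho(F)\rho(\zeta)\rho(F)^{-1}=\rho(\mathrm{Ad}(F)\zeta)$, which is what is needed. The scalar prefactors $f(x)\mapsto f(G(y))$ and $g(x)\mapsto g(G(y))$ match automatically since gluing of $\OO$ is just restriction/substitution $x\mapsto G(y)$ and the action \eqref{Lact} is $\OO$-bilinear.

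I expect the main obstacle to be purely bookkeeping: matching the derivation $\mathrm{Ad}(\varphi_G)\zeta$ produced by the group-theoretic conjugation with the explicit power-series expression appearing on the right-hand side of \eqref{Ltrans}, and being careful about the direction of the transformation ($\varphi_2\circ\psi_1$ versus its inverse) and about which variables $Y_i$ are identified with $X_i$. Once that identification is in place — and it is forced, since both describe the unique derivation of $K[[Y]]$ corresponding under $\varphi_G$ to $\zeta$ — cocycle consistency on triple overlaps $U_1\cap U_2\cap U_3$ follows from the cocycle property of the coordinate-change automorphisms $\varphi_G$ (already noted after \eqref{coord}) together with the fact that $\rho$ is a homomorphism. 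Thus the local actions \eqref{Lact} glue to a global $\Lp$-module structure on $\J^W$, proving the lemma.
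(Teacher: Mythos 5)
Your proposal is correct and follows essentially the same route as the paper: the whole content of the lemma is the identity $\rho(\varphi_G)\rho(\zeta)\rho(\varphi_G)^{-1}=\rho(\mathrm{Ad}(\varphi_G)\zeta)$ together with the identification of $\mathrm{Ad}(\varphi_G)\zeta$ with the right-hand side of (\ref{Ltrans}), and you have isolated exactly these two points. Be aware that the ``bookkeeping'' step you defer --- the chain-rule computation showing that conjugation by $\varphi_G$ on derivations of power series reproduces (\ref{Ltrans}) verbatim --- is precisely what the paper's proof consists of, so it should be written out rather than declared forced.
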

\begin{proof}
First of all, we point out that transformation law (\ref{Ltrans}) is the conjugation by $\varphi_G$. Indeed,
\begin{align*}
\varphi_G g(X) \pp{}{X_p} \varphi_G^{-1} f(Y) &= \varphi_G g(X) \pp{}{X_p} f( H(x+X) - H(x)) \\
&= \varphi_G g(X) \pp{f}{y_q} (H(x+X) - H(x)) \pp{H_q}{x_p} (x + X) \\
&= g( G(y+Y) - G(y)) \pp{H_q}{x_p} (G(y+Y)) \pp{f}{y_q} (Y).
\end{align*}
Thus 
$$\varphi_G g(X) \pp{}{X_p} \varphi_G^{-1} =  g( G(y+Y) - G(y)) \pp{H_q}{x_p} (G(y+Y)) \pp{}{Y_q}.$$
Then
$$\varphi_G g(X) \pp{}{X_p}  =  g( G(y+Y) - G(y)) \pp{H_q}{x_p} (G(y+Y)) \pp{}{Y_q}\varphi_G.$$
Since this relation holds in representation $\rho$, the transformation law in $\J^W$ is compatible with the transformation law in $\Lp$, which implies the claim of the Lemma.
\end{proof}

\begin{theorem}
\label{bundle}
The vector bundle $\J^W$ is a sheaf of $AV$-modules. Locally, in an \'etale chart $U$ with uniformizing parameters 
$(x_1, \ldots, x_n)$, the action of vector fields is given by the formula:
\begin{align*}
 f \frac{\del}{\del x_i} (g \otimes w) &= f \, \pp{g}{x_i} \otimes w +
\sum_{s \in \Z^n_+ \backslash \{ 0 \}} \frac{1}{s!}\, g \, \frac{\del^s f}{\del x^s}\, \left( X^s \frac{\del}{\del X_i} \right) w \\
&= f \, \pp{g}{x_i} \otimes w + g(x) \left( f(x+X) - f(x) \right) \pp{}{X_i} w, 
\end{align*}
where $f, g \in A$, $w \in W$.
\end{theorem}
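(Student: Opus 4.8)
The plan is to put an $\Lp$-charged $\D$-module structure on $\J^W$ in the sense of Definition~\ref{deform} and then invoke Theorem~\ref{main}; the asserted vector-field formula will then come out by unwinding that theorem. By Lemma~\ref{Lprop} the $\Lp$-module structure is already in hand: over an \'etale chart $U$ it is the action in which $\OO(U)\otimes\hL_+$ acts on $\OO(U)\otimes W$ by multiplication on the first factor and via $\rho$ on $W$. It remains to supply a $\D$-module structure and verify that it is charged. In a chart $U$ with uniformizing parameters $(x_1,\ldots,x_n)$, let $\D(U)$ act on $\J^W(U)=\OO(U)\otimes W$ through the first tensor factor: $\OO(U)$ by multiplication and $\pd{x_i}$ by the derivation $\pd{x_i}\otimes\mathrm{id}$, extended to all of $\OO(U)\otimes W$. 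Condition~(1) of Definition~\ref{deform} is then immediate, since $\pd{x_i}$ merely differentiates the $\OO(U)$-coefficient of an element of $\OO(U)\otimes\hL_+$ and commutes with the $\rho$-action on $W$, while $f_1\bigl(f_2\otimes g(X)\pd{X_j}\bigr)=f_1f_2\otimes g(X)\pd{X_j}$ is associativity in $\OO(U)$; Lemma~\ref{inv} records that this compatibility does not depend on the chart.

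The substance of the proof is Condition~(2): on an overlap $U_1\cap U_2$, the gluing map $T\colon g(x)w\mapsto g(G(y))\,\rho(\varphi_G)w$ of (\ref{gluing}) must conjugate the $\D(U_1)$-action into the homomorphism (\ref{dtrans}); that is, $T\circ\pd{x_i}\circ T^{-1}$, as an operator on $\OO(U_2)\otimes W$, must equal the operator obtained by letting $\frac{\partial H_j}{\partial x_i}(G(y))\pd{y_j}+\bigl(\frac{\partial H_j}{\partial x_i}(G(y+Y))-\frac{\partial H_j}{\partial x_i}(G(y))\bigr)\pd{Y_j}$ act in the natural way, the first summand by differentiating the $\OO(U_2)$-coefficient and the second through $\rho$ on $W$. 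Since $T^{-1}$ sends $h(y)w'$ to $h(H(x))\,\rho(\varphi_G^{-1})w'$, where $\varphi_G^{-1}$ is the automorphism $X_j\mapsto H_j(x+X)-H_j(x)$ with coefficients in $\OO(U_1\cap U_2)$ expressed through $x$, applying $\pd{x_i}$ and using the Leibniz rule splits the result in two. Differentiating the function $h(H(x))$ and transporting back by $T$ gives, by the chain rule and $H(G(y))=y$, precisely $\frac{\partial H_j}{\partial x_i}(G(y))\pd{y_j}$ — exactly the computation already done in the proof of Lemma~\ref{Lprop}, matching the first summand of (\ref{dtrans}). Differentiating the matrix $\rho(\varphi_G^{-1})$ and transporting back leaves $h(y)\cdot\rho(\varphi_G^{-1})^{-1}\bigl(\pd{x_i}\rho(\varphi_G^{-1})\bigr)\big|_{x=G(y)}\,w'$, a logarithmic derivative.

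The main obstacle is to evaluate this logarithmic derivative and recognize it as $\rho$ of the second summand of (\ref{dtrans}). As $W$ is rational, Lemma~\ref{Liecor} shows $\rho$ factors through a finite-dimensional algebraic group, whence $\rho(\varphi_G^{-1})^{-1}\,\pd{x_i}\rho(\varphi_G^{-1})=\rho\bigl(\varphi_G\cdot\pd{x_i}\varphi_G^{-1}\bigr)$, where $\varphi_G\cdot\pd{x_i}\varphi_G^{-1}\in\OO(U_1\cap U_2)\otimes\hL_+$ is computed by differentiating the family $x\mapsto\varphi_G^{-1}$ of automorphisms and translating back to the identity. A direct computation, using $G_j(H(x))=x_j$ and $H_j(G(y))=y_j$, shows that $\varphi_G\cdot\pd{x_i}\varphi_G^{-1}$ sends $X_j$ to $\frac{\partial H_j}{\partial x_i}(G(y+X))-\frac{\partial H_j}{\partial x_i}(G(y))$ after the substitution $y=H(x)$; hence the logarithmic derivative is the derivation $\bigl(\frac{\partial H_j}{\partial x_i}(G(y+X))-\frac{\partial H_j}{\partial x_i}(G(y))\bigr)\pd{X_j}$, which under the identification $X_i\leftrightarrow Y_i$ is exactly the $\pd{Y_j}$-part of (\ref{dtrans}). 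This establishes Condition~(2), so $\J^W$ is an $\Lp$-charged $\D$-module and Theorem~\ref{main} applies.

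It remains to read off the vector-field action. By Theorem~\ref{main}, $f\pd{x_i}$ acts on $\J^W(U)=\OO(U)\otimes W$ as $f\pd{x_i}+\bigl(f(x+X)-f(x)\bigr)\pd{X_i}$. The operator $f\pd{x_i}$ differentiates the $\OO(U)$-factor, giving $f\,\pp{g}{x_i}\otimes w$; and $\bigl(f(x+X)-f(x)\bigr)\pd{X_i}=\sum_{s\in\Z^n_+\backslash\{0\}}\frac{1}{s!}\frac{\partial^s f}{\partial x^s}(x)\otimes X^s\pd{X_i}$ lies in $\OO(U)\otimes\hL_+$ and acts through $\rho$ on $W$, giving $\sum_{s}\frac{1}{s!}\,g\,\frac{\partial^s f}{\partial x^s}\bigl(X^s\pd{X_i}\bigr)w=g(x)\bigl(f(x+X)-f(x)\bigr)\pd{X_i}w$. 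Summing the two recovers the displayed formula, completing the proof.
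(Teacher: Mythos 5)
Your proposal is correct and follows essentially the same route as the paper: establish the $\Lp$-module structure via Lemma~\ref{Lprop}, take the obvious $\D(U)$-action on the first tensor factor, verify the charged compatibility (\ref{dtrans}) by commuting $\pd{x_i}$ past $\varphi_G^{-1}$, and invoke Theorem~\ref{main}. The only difference is cosmetic: the paper verifies the operator identity $\pd{x_i}\circ\varphi_G^{-1}=\varphi_G^{-1}\circ(\text{\ref{dtrans}})$ on the tautological module of functions and then asserts it holds in $W$, whereas you recast the same computation as a logarithmic-derivative identity $\rho(\varphi_G^{-1})^{-1}\,\pd{x_i}\rho(\varphi_G^{-1})=\rho(\varphi_G\cdot\pd{x_i}\varphi_G^{-1})$ and justify the transfer to $W$ explicitly via rationality and Lemma~\ref{Liecor} --- a slightly more careful treatment of a step the paper leaves implicit.
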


Note that by Lemma \ref{nilp}, the sum in the right-hand side is finite. The formula for this action for the Lie algebra of vector fields on a torus first appeared in \cite{B}.

\begin{proof}
Lemma \ref{Lprop} states that bundle $\J^W$ is an $\Lp$-module. We also have an obvious local $D$-module structure,
with $D(U)$ acting on the first tensor factor of $\OO(U) \otimes W$. We need to show that $\J^W$ is an $\Lp$-charged sheaf of $D$-modules. 

First, we would like to establish a commutation relation for $\pp{}{x_i} \circ \varphi_G^{-1}$:
\begin{align*}
\pp{}{x_i}  \varphi_G^{-1} & g(y) f(Y) = \pp{}{x_i} g(H(x)) f( H(x+X) - H(x)) \\ 
&= \pp{g}{y_j} (H(x)) \pp{H_j}{x_i}  f( H(x+X) - H(x)) \\ 
&+ g(H(x)) \pp{f}{y_j} \left( H(x+X) - H(x) \right) \left( \pp{H_j}{x_i}(x+X) - \pp{H_j}{x_i}(x) \right) \\
&=  \varphi_G^{-1} \bigg( \pp{H_j}{x_i} (G(y)) \pp{}{y_j} \\ 
& {\hskip 2cm}+ \left(  \pp{H_j}{x_i}(G(y+Y)) - \pp{H_j}{x_i}(G(y)) \right) \pp{}{Y_j} \bigg) g(y) f(Y).
\end{align*}
Thus, 
$$\pp{}{x_i}  \varphi_G^{-1} = \varphi_G^{-1} \bigg( \pp{H_j}{x_i} (G(y)) \pp{}{y_j} 
+ \left(  \pp{H_j}{x_i}(G(y+Y)) - \pp{H_j}{x_i}(G(y)) \right) \pp{}{Y_j} \bigg),$$
and the same relation holds in module $W$, but this is exactly what is required for $\J^W$ to be an $\Lp$-charged sheaf of $D$-modules. 

\end{proof}

If we apply the above jet module construction to a rational finite-dimensional $gl_n$-module (viewing it as a module for $\hL_+$ with a trivial action of $\m \hL_+$), we will recover the construction of a sheaf of tensor modules on $X$.

\begin{remark}
It is straightforward to see that tensor products and duality for jet modules match tensor products and duality of $\hL_+$-modules:
$$\J^{W_1} \otimes_\OO \J^{W_2} \cong \J^{W_1 \otimes W_2}, $$
$$\Hom_\OO (\J^W, \OO) \cong \J^{W^*}.$$ 
\end{remark}

\section{Realization of Rudakov modules with delta functions}
\label{Rud}

In a pioneering paper \cite{R} on the representation theory of Lie algebras of vector fields, Rudakov introduced and studied a class of modules for the vector fields on an affine space. This class of modules was generalized in \cite{BFN} to the case of arbitrary affine varieties. It was pointed out in \cite{BFN} that Rudakov modules are not just $V$-modules, but actually $AV$-modules.

Let us present the sheaf version of Rudakov modules. Fix a point $P \in X$ and a rational finite-dimensional $\hL_+$-module $W$.
Let $U$ be an \'etale chart of $X$ with uniformizing parameters $(x_1, \ldots, x_n)$, containing point $P$.

Let $\m_P$ be the maximal ideal in $A = \OO(U)$, corresponding to point $P$. Let $V = \Theta(U)$.

\begin{lemma} [\cite{BFN}]
\label{trunc}
There is an isomorphism of Lie algebras:
$$\m_P V / \m_P^{N+1} V \, \cong \, \hL_+ / \m^N \hL_+ .$$ 
The isomorphism is given by the expansion in local parameters:
$$ f \pp{}{x_i} \mapsto \sum_{0 < |s| < N} \frac{1}{s!} \, \frac{\del^s f}{\del x^s}(P) \, X^s \pp{}{X_i}.$$ 
\end{lemma}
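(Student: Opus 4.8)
The plan is to exhibit the stated map explicitly and check that it is a well-defined Lie algebra isomorphism by a direct computation in local coordinates. First I would recall from Lemma \ref{Mumford} that in the \'etale chart $U$ we have $V = \bigoplus_i A \pd{x_i}$, so every element of $\m_P V$ is uniquely of the form $\sum_i f_i \pd{x_i}$ with $f_i \in \m_P$. Fix local coordinates $x_i$ so that $x_i(P)=0$ (we may always translate the uniformizing parameters by the constants $x_i(P)$), so that $\m_P$ is generated by $x_1, \ldots, x_n$. For $f \in \m_P$ the Taylor coefficients $\frac{1}{s!}\frac{\del^s f}{\del x^s}(P)$ vanish for $s=0$, so the proposed assignment $f\pd{}{x_i} \mapsto \sum_{0<|s|} \frac{1}{s!}\frac{\del^s f}{\del x^s}(P) X^s \pd{}{X_i}$ indeed lands in $\hL_+ = \bigoplus_i \m \pd{}{X_i}$; truncating at $|s| < N$ then lands in $\hL_+/\m^N\hL_+$, and the kernel of the truncation is exactly $\m_P^{N+1} V$ since $f \in \m_P^{N+1}$ iff all Taylor coefficients of $f$ at $P$ of order $\le N$ vanish (here one uses that the $x_i$ form a regular system of parameters at the smooth point $P$). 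So the map is a well-defined linear bijection $\m_P V/\m_P^{N+1} V \to \hL_+/\m^N\hL_+$.

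Next I would check it is a Lie algebra homomorphism. The cleanest route is to observe that this map is simply the restriction, along the inclusion $\m_P V \hookrightarrow A \# V \xrightarrow{\varphi} V \ltimes (A\otimes \hL_+)$ of Theorem \ref{iso}, followed by evaluation at the point $P$ (i.e. reduction modulo $\m_P$ in the $A$-coefficients) and truncation modulo $\m^N$. Concretely, $\varphi(1 \# f\pd{}{x_i}) = f\pd{}{x_i} + (f(x+X)-f(x))\pd{}{X_i}$; the first summand lies in $V$ and is killed by the evaluation-at-$P$ plus projection to the $\hL_+$-part, while the second summand, evaluated at $P$, is $\sum_{s\ne 0}\frac{1}{s!}\frac{\del^s f}{\del x^s}(P) X^s \pd{}{X_i}$, which is exactly our map. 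Since $\varphi$ is a Lie algebra isomorphism (Theorem \ref{iso}), evaluation at $P$ is an algebra homomorphism $A \to K$, and the bracket in $A\otimes\hL_+$ is $A$-bilinear on top of the $\hL_+$-bracket, the composite is a Lie algebra homomorphism onto $\hL_+/\m^N\hL_+$; one must only confirm that the ideal $\m_P^{N+1} V$ maps into the part being truncated, which is the kernel computation from the previous paragraph together with the fact that $\m_P$ acting in the $\hL_+$-slot raises the $\m$-adic order by one.

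Alternatively, if one prefers a self-contained check avoiding Theorem \ref{iso}, one computes directly: for $f\pd{}{x_i}, g\pd{}{x_j} \in \m_P V$ one has $[f\pd{}{x_i}, g\pd{}{x_j}] = f\frac{\del g}{\del x_i}\pd{}{x_j} - g\frac{\del f}{\del x_j}\pd{}{x_i}$, and one verifies that the $X$-Taylor expansion at $P$ of this expression agrees with the bracket in $\hL_+$ of the expansions of $f\pd{}{x_i}$ and $g\pd{}{x_j}$. This is precisely the statement that the map sending a function to its Taylor series at $P$, and a derivation $\sum f_i\pd{}{x_i}$ to $\sum (\text{Taylor of }f_i)\pd{}{X_i}$, is a homomorphism from $\Der(A)$ to $\Der K[[X_1,\ldots,X_n]]$ — which holds because Taylor expansion $A \to K[[X_1,\ldots,X_n]]$ is a ring homomorphism intertwining $\pd{}{x_i}$ with $\pd{}{X_i}$, and the Lie bracket of derivations is functorial under such intertwining ring maps. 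Restricting to $\m_P V$ forces the image into $\hL_+$, and passing to the quotients is compatible with brackets because $\m_P^{N+1}V$ and $\m^N\hL_+$ are ideals corresponding under the map.

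The main obstacle I anticipate is purely bookkeeping: making sure the truncation indices line up, i.e. that $f\in\m_P^{N+1}V$ maps to $0$ in $\hL_+/\m^N\hL_+$ and conversely that the map is \emph{surjective} onto $\hL_+/\m^N\hL_+$ — surjectivity follows because any monomial vector field $X^s\pd{}{X_i}$ with $0<|s|\le N$ is the image of $\frac{1}{s!}x^s \pd{}{x_i}$ (using again that $x_1,\ldots,x_n$ is a regular system of parameters at $P$, so monomials in the $x_i$ realize all prescribed finite jets). There is also a small subtlety that $G_i,H_j$-style implicit-function issues do not arise here since we work within a single chart, so the partial derivatives $\frac{\del^s f}{\del x^s}$ are the genuine iterated derivations of Lemma \ref{Mumford}(3); and that the degree-shift bracket identity from the proof of Lemma \ref{nilp} is consistent with declaring $\m^N\hL_+$ the kernel. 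None of this is deep, but it must be stated carefully.
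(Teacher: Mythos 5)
The paper does not actually prove this lemma --- it is quoted from \cite{BFN} --- so there is no internal argument to compare against; your proposal supplies a genuine proof, and it is essentially correct. The cleanest of your three routes is the self-contained one: Taylor expansion at $P$ is a ring homomorphism $A\to K[[X_1,\ldots,X_n]]$ intertwining $\pp{}{x_i}$ with $\pp{}{X_i}$, hence $\sum_i f_i\pp{}{x_i}\mapsto\sum_i T(f_i)\pp{}{X_i}$ is a Lie algebra homomorphism, it sends $\m_P V$ into $\hL_+$, and the kernel/surjectivity bookkeeping is exactly the standard fact that $T$ identifies the associated graded of $\m_P$ with that of $\m$ at a smooth point (étaleness of the chart, Lemma \ref{Mumford}(4), is what guarantees the shifted $x_i$ are a regular system of parameters). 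Three small repairs. First, the truncation index in the printed statement is a typo: to have kernel $\m_P^{N+1}V$ and target $\hL_+/\m^N\hL_+=\hL_+/\bigoplus_i\m^{N+1}\pp{}{X_i}$ the sum must run over $0<|s|\le N$, which is how the same formula appears later in the proof of Theorem \ref{real}; your kernel computation already tacitly uses $\le N$, so you should state the reconciliation rather than leave it as ``bookkeeping.'' Second, in your route via Theorem \ref{iso}, projection of $\V\ltimes(\OO\otimes\hL_+)$ onto the ideal $\OO\otimes\hL_+$ is \emph{not} a Lie algebra homomorphism by itself; the cross terms $[f\pp{}{x_i},\beta]$ and $[\alpha,g\pp{}{x_j}]$ survive the projection and only die after evaluation at $P$, precisely because $f,g\in\m_P$. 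This needs to be said explicitly, otherwise that version of the argument has a gap (your third, direct route avoids the issue entirely). Third, a trivial slip: since $T(x^s)=X^s$ when the coordinates are centred at $P$, the class $X^s\pp{}{X_i}$ is the image of $x^s\pp{}{x_i}$, not of $\frac{1}{s!}x^s\pp{}{x_i}$; surjectivity is unaffected.
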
  

By Lemma \ref{nilp}, there exists $N \in \N$ such that $W$ is annihilated by $\m^N \hL_+$. The isomorphism of the previous Lemma allows us to view $W$ as a module for the Lie algebra $\m_P V$ with  $\m_P^{N+1} V$ acting trivially on $W$.
We also view $W$ as an $A$-module, with $f w = f(P) w$ for $f \in A$, $w \in W$.

Let us also state the following Lemma. Its proof may be given using the methods of \cite{BF}.
\begin{lemma}
\label{mVnilp}
Every non-zero ideal of $\m_P V$ contains  $\m_P^{N+1} V$ for some $N \in \N$.
\end{lemma}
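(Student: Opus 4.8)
The plan is to reduce the statement to a finiteness property of the pronilpotent Lie algebra $\hL_+$, exactly as in Lemma \ref{nilp}, and then transport it through the isomorphism of Lemma \ref{trunc}. Let $I \subseteq \m_P V$ be a non-zero ideal, and pick $0 \neq \xi \in I$. Expanding $\xi$ in the uniformizing parameters $(x_1,\ldots,x_n)$ and using $\xi \in \m_P V$, the leading term of $\xi$ at $P$ lies in some $\m_P^r V / \m_P^{r+1} V$ with $r \geq 1$; under the isomorphism $\m_P V / \m_P^{N+1} V \cong \hL_+/\m^N\hL_+$ of Lemma \ref{trunc} (for $N$ large), this leading term corresponds to a non-zero element of $S_r \frac{\partial}{\partial X_i}$ for some $i$. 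So it suffices to prove the analogous statement for $\hL_+$ itself: every non-zero ideal $\bar I$ of $\hL_+$ contains $\m^M \hL_+$ for some $M$. Indeed, once we know this, apply it to the image $\bar I$ of $I$ in $\hL_+/\m^N\hL_+ \cong \m_PV/\m_P^{N+1}V$ with $N$ chosen large enough that $\bar I \neq 0$ there, obtaining $\m^M\hL_+ \subseteq \bar I$ for some $M < N$; pulling back, $\m_P^{M+1} V \subseteq I + \m_P^{N+1} V$, and a short descending induction (using that $I$ is an ideal, so bracketing with $\m_P V$ keeps us inside $I$) upgrades this to $\m_P^{M+1} V \subseteq I$.

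For the statement about $\hL_+$, I would argue with the grading by degree. Write $\hL_+ = \bigoplus_{r \geq 1} \g_r$ where $\g_r = S_r \otimes \langle \partial/\partial X_1, \ldots, \partial/\partial X_n\rangle$ is the space of homogeneous vector fields of degree $r$ (with $[\g_r,\g_s]\subseteq \g_{r+s-1}$, noting $\g_1 \cong \gl_n$ sits in degree $0$ of the bracket-grading). Take a non-zero $\eta \in \bar I$ and let $\eta = \eta_{r_0} + \eta_{r_0+1} + \cdots$ be its expansion with $\eta_{r_0} \neq 0$. The key computation, already essentially carried out in the proof of Lemma \ref{nilp}, is that repeatedly bracketing against suitable elements of $\g_1$ (the Euler-type operators and, more importantly, the full $\gl_n \oplus \g_2 \oplus \g_3$) lets one (i) extract the lowest homogeneous component $\eta_{r_0}$ — using a semisimple element of $\g_1$ to separate degrees — and then (ii) generate from a single non-zero element of $\g_{r_0}$ all of $\g_M$ for $M$ sufficiently large, via the identity of the type $[\g_N \oplus \g_{N+1}, \m\frac{\partial}{\partial X_i}] = \m^N \frac{\partial}{\partial X_i}$ that appears at the end of the proof of Lemma \ref{nilp}. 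Concretely: $\gl_n$ acts irreducibly enough on each $\g_r$ that bracketing $\eta_{r_0}$ with $\g_1$ fills out a $\gl_n$-submodule of $\g_{r_0}$, then bracketing with $\g_2, \g_3$ climbs up the degrees, and once one controls $\g_M \oplus \g_{M+1}$ entirely the displayed identity gives $\m^M \hL_+ \subseteq \bar I$.

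The main obstacle is step (i)–(ii) of the previous paragraph: showing that the ideal generated by a \emph{single} non-zero homogeneous element of $\g_{r_0}$ already contains some $\m^M\hL_+$. Separating homogeneous components is routine (apply $\ad$ of the Euler operator $\sum X_j \partial/\partial X_j$ and use a Vandermonde argument), but the climbing-up argument needs care to ensure no ``accidental'' $\gl_n$-submodule obstructs the generation — this is where one genuinely uses that we are in $\hL_+$ and not an arbitrary graded Lie algebra, and where the cited methods of \cite{BF} (which handle exactly this kind of ideal-generation question for Lie algebras of vector fields) do the work. I would either invoke those methods directly or reprove the needed special case by the explicit bracket identities above. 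Everything else — the passage through Lemma \ref{trunc}, the descending induction lifting $\m_P^{M+1}V \subseteq I + \m_P^{N+1}V$ to $\m_P^{M+1}V \subseteq I$, and the reduction of the original statement to the graded one — is formal.
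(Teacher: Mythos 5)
The paper never actually proves this lemma --- it only remarks that ``its proof may be given using the methods of \cite{BF}'' --- so there is no argument of the authors to compare yours against; I can only judge whether your sketch closes into a proof. The graded half of your plan is reasonable and is surely part of any complete argument: pass to $\m_P V/\m_P^{N+1}V \cong \hL_+/\m^N\hL_+$ via Lemma \ref{trunc}, separate homogeneous components of a generator with the Euler operator and a Vandermonde argument, and climb degrees using the identity $\left[ S_N\frac{\del}{\del X_i}\oplus S_{N+1}\frac{\del}{\del X_i},\,\m\frac{\del}{\del X_i}\right]=\m^N\frac{\del}{\del X_i}$ from the proof of Lemma \ref{nilp}. (One repair needed there: you must prove the \emph{uniform} version of the generation statement, where the $M$ with $\m^M\hL_+\subseteq\bar I$ depends only on the degree $r_0$ of the lowest non-vanishing component of a generator and not on $N$; otherwise your assertion ``$M<N$'' is unsupported, since the preimage of $\bar I$ in $\hL_+$ trivially contains $\m^N\hL_+$ and the non-uniform statement gives nothing new.)

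The genuine gap is the lifting step, which you describe as ``a short descending induction'' and declare formal. Everything you do in the quotients can only ever yield
$$\m_P^{M+1}V \ \subseteq\ I + \m_P^{N'}V \quad\text{for every } N',$$
i.e.\ that $I\cap\m_P^{M+1}V$ is \emph{dense} in $\m_P^{M+1}V$ for the $\m_P$-adic topology. Your proposed mechanism for removing the error term --- bracketing with $\m_P V$ and using that $I$ is an ideal --- cannot do the job, because $[\m_P V,\m_P^{N'}V]\subseteq\m_P^{N'}V$: brackets respect the filtration, so every element of $I$ manufactured this way still carries an uncontrolled tail in $\m_P^{N'}V$, and you never land exactly on a prescribed element of $\m_P^{M+1}V$. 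Passing from density to the actual containment $\m_P^{M+1}V\subseteq I$ amounts to knowing that $I$ is $\m_P$-adically closed, which is essentially the statement being proved. In the polynomial case this difficulty is invisible because the filtration comes from an honest grading and Taylor expansions are finite, so homogeneous components can be extracted as exact elements of $I$; in a general \'etale chart the coefficients of $\xi$ are merely algebraic over $K[x_1,\ldots,x_n]$ and have infinite Taylor tails. This is precisely the point where the methods of \cite{BF} are required --- producing \emph{exact} elements of the ideal from identities such as $[f\eta,\xi]=f[\eta,\xi]-\xi(f)\eta$ together with the $A$-module structure of $V$, rather than arguing modulo $\m_P^{N'}V$. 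In short, you have delegated to \cite{BF} the comparatively routine graded generation statement while treating as formal the step that actually needs those methods.
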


\begin{corollary}
\label{mVann}
Let $W$ be a finite-dimensional representation of $\m_P V$. Then there exists $N \in \N$ such that 
$\m_P^{N+1} V$ annihilates $W$.
\end{corollary}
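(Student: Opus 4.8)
The plan is to deduce Corollary~\ref{mVann} from Lemma~\ref{mVnilp} by a standard kernel/faithfulness argument. Let $(W,\sigma)$ be a finite-dimensional representation of the Lie algebra $\m_P V$, and let $\mathfrak{k} = \ker \sigma \subseteq \m_P V$ be the kernel of the representation, which is an ideal of $\m_P V$. We must show $\mathfrak{k}$ contains $\m_P^{N+1} V$ for some $N$. If $W = 0$ this is trivial (the kernel is everything), so assume $W \neq 0$; then $\mathfrak{k}$ is still an ideal, and the only way Lemma~\ref{mVnilp} could fail to apply is if $\mathfrak{k} = 0$. So the crux is to rule out a nonzero finite-dimensional faithful representation of $\m_P V$, or more precisely to handle the case $\mathfrak{k}=0$ directly.

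First I would observe that if $\mathfrak{k} \neq 0$, then Lemma~\ref{mVnilp} immediately gives $\m_P^{N+1} V \subseteq \mathfrak{k}$ for some $N \in \N$, and since $\mathfrak{k}$ annihilates $W$ we are done. So it remains to treat the case $\mathfrak{k} = 0$, i.e.\ $\sigma$ faithful. In that case $\m_P V$ embeds into $\gl(W) = \End_K(W)$, which is finite-dimensional, so $\m_P V$ would itself be finite-dimensional. But $\m_P V$ is infinite-dimensional: using Lemma~\ref{Mumford}(1), $V = \bigoplus_i A\,\pd{x_i}$, and $\m_P V = \bigoplus_i \m_P\,\pd{x_i}$, which is infinite-dimensional over $K$ because $\m_P$ has infinite codimension $\dim A/\m_P^{k}$ growing without bound (equivalently, $\m_P \supseteq \m_P^2 \supseteq \cdots$ is a strictly descending chain, each quotient nonzero since $X$ is positive-dimensional). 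Hence $\mathfrak{k} = 0$ is impossible when $W \neq 0$, and we are reduced to the previous case.

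An alternative, slightly cleaner route that avoids splitting into cases: apply Lemma~\ref{trunc} in reverse. Let $N_0$ be large and consider the chain of ideals $\m_P^{m} V$ for $m \geq 1$; since $\bigcap_m \m_P^m V = 0$ and $W$ is finite-dimensional, the descending chain of subspaces $(\m_P^m V)\,W \subseteq W$ stabilizes, and combined with the fact (from Lemma~\ref{mVnilp} applied to the ideal generated by the stable part, or directly from the nilpotence built into Lemma~\ref{trunc} and Lemma~\ref{nilp}) one concludes the action factors through some finite quotient $\m_P V / \m_P^{N+1} V$. However, I expect the kernel argument above to be the shortest and most transparent.

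The main obstacle is genuinely minor here: it is simply making sure the degenerate case $W = 0$ and the faithful case are both covered, since Lemma~\ref{mVnilp} as stated speaks of \emph{non-zero} ideals. The substantive input — that every nonzero ideal of $\m_P V$ is cofinite and in fact contains a power $\m_P^{N+1}V$ — is exactly Lemma~\ref{mVnilp}, which we are permitted to assume; and the finite-dimensionality obstruction to faithfulness is immediate from Lemma~\ref{Mumford}(1). So the proof is short, and the only care needed is the bookkeeping of which ideal plays the role of the annihilator.
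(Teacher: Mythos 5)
Your kernel argument is exactly the paper's proof: the annihilator (equivalently, the kernel of $\sigma$) is an ideal of $\m_P V$, it must be nonzero because $\m_P V$ is infinite-dimensional while $\End_K(W)$ is not, and Lemma~\ref{mVnilp} then forces it to contain some $\m_P^{N+1}V$. Your extra care over the $W=0$ case and the verification that $\m_P V$ is infinite-dimensional are just added detail on the same route, so nothing further is needed.
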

\begin{proof} Annihilator of a module is an ideal, and it must be non-zero since $\m_P V$ is infinite-dimensional and $W$ is finite-dimensional.
\end{proof}

Rudakov module $R^W_P$ is defined as the induced module
$$ R^W_P = \Ind_{A \# U(\m_P V)}^{A \# U(V)} W \cong K \left[ \pp{}{x_1}, \ldots, \pp{}{x_n} \right] \otimes W.$$

\begin{lemma}
\label{Rdiff}
Rudakov module $R^W_P$ is differentiable, it is annihilated by $\Delta^{N+1} \otimes_A V$.
\end{lemma}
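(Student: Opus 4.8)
The plan is to unwind the definition of the Rudakov module $R^W_P \cong K\left[\pd{x_1}, \ldots, \pd{x_n}\right] \otimes W$ and show directly that the distinguished degree-$(N+1)$ elements $\Delta^{N+1} \otimes_A V$ annihilate it. Recall that $R^W_P$ is induced from the subalgebra $A \# U(\m_P V)$, where $W$ is the module on which $f \in A$ acts as $f(P)$ and $\m_P^{N+1} V$ acts trivially (this being well-defined by Lemma \ref{trunc} and Corollary \ref{mVann}, with $N$ chosen so that $\m^N \hL_+$ kills $W$).

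First I would recall, as noted in the excerpt, that the subspace $\Delta^{N+1} \otimes_A V$ inside $A \hh V$ coincides with the span of the ``Grothendieck order $N+1$'' elements
$$\sum_{k=0}^{N+1} (-1)^k \binom{N+1}{k} f^k \# f^{N+1-k}\eta, \quad f \in A, \ \eta \in V,$$
so that being annihilated by $\Delta^{N+1}\otimes_A V$ is the same as being $(N+1)$-differentiable; and that $N$-differentiable $AV$-modules are precisely those annihilated by $\Delta^N \otimes_A V$. So the task reduces to showing that $\Delta^{N+1} \otimes_A V$ acts by zero on the generating subspace $1 \otimes W \subset R^W_P$, and then that this property propagates to all of $R^W_P$.

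The key computation is on generators: for $w \in W$ and $\xi \in \Delta^{N+1} \otimes_A V$, I would write $\xi$ in terms of $x_i - x_i(P)$, i.e., use that $\Delta = \ker(A \otimes_K A \to A)$ is generated locally by the $\delta(x_i)$, and that applying an element of $\Delta^{N+1} \otimes_A V$ to a vector on which $A$ acts through evaluation at $P$ amounts to evaluating at $P$ after expanding; concretely, $\xi$ acting on $1 \otimes w$ produces the element of $\m_P^{N+1} V$ obtained by Taylor-truncating $\xi$ to order $\geq N+1$ at $P$, via the expansion map of Lemma \ref{trunc}. Since $\m_P^{N+1} V$ acts trivially on $W$, this vanishes. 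To extend from $1 \otimes W$ to all of $R^W_P = K[\partial]\otimes W$, I would use that $\Delta^{N+1}\otimes_A V$ is an ideal in the Lie algebra $A \# V$ (indeed $\Delta^m \otimes_A V$ are ideals, as stated in the excerpt) — more precisely, I would check that the subspace of $R^W_P$ annihilated by $\Delta^{N+1}\otimes_A V$ is stable under the action of $U(V)$, using the fact that $[\Delta^{N+1}\otimes_A V, V] \subseteq \Delta^{N+1}\otimes_A V$ (bracketing down in jet order is controlled since $V$ acts on $A\otimes_K A$ through the first factor, preserving powers of $\Delta$), so that $\partial^k \cdot \xi \cdot w = \xi' \cdot \partial^{k'} \cdot w + \cdots$ with each term again involving an element of $\Delta^{N+1}\otimes_A V$ hitting $1\otimes W$.

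The main obstacle I expect is bookkeeping: carefully matching the abstract element $\Delta^{N+1}\otimes_A V$ of the completed smash product with the concrete Taylor-truncation in local parameters, and making sure the ``propagation'' argument is airtight — that the annihilator of $\Delta^{N+1}\otimes_A V$ in $R^W_P$ really is a $U(V)$-submodule, which hinges on the ideal property $[V, \Delta^{N+1}\otimes_A V]\subseteq \Delta^{N+1}\otimes_A V$ together with the Leibniz rule. Once these two points are set up, the vanishing is immediate from $\m_P^{N+1}V \cdot W = 0$, so the conceptual content is entirely in Lemma \ref{trunc} and the ideal structure, both already available.
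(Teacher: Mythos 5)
Your overall strategy is the same as the paper's: first show that $\Delta^{N+1}\otimes_A V$ kills the generating subspace $1\otimes W$, then propagate to all of $R^W_P\cong K\left[\pd{x_1},\ldots,\pd{x_n}\right]\otimes W$ by commuting past the $\pd{x_i}$. Your propagation step is fine and is in fact a cleaner packaging of the paper's argument: the paper runs an explicit induction on the degree of the monomial in the $\pd{x_i}$, using the commutator of $\pd{x_i}$ with a spanning element $(g\otimes 1)\delta(f_1)\cdots\delta(f_{N+1})\eta$ and splitting each $\pp{f_k}{x_i}$ along $A=K\cdot 1\oplus\m_P$, whereas you observe directly that the annihilator of $\Delta^{N+1}\otimes_A V$ is a $V$-submodule because $[V,\Delta^{N+1}\otimes_A V]\subseteq\Delta^{N+1}\otimes_A V$; these are equivalent.

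The gap is in the key computation on $1\otimes W$. You claim that applying $\xi=(g\otimes 1)\delta(f_1)\cdots\delta(f_{N+1})\eta$ to $1\otimes w$ amounts to acting by the Taylor truncation of $\xi$ at $P$, i.e.\ by an element of $\m_P^{N+1}V$, so that vanishing follows from $\m_P^{N+1}V\cdot W=0$. That reduction is not valid as stated. Expanding the product of the $\delta(f_j)$ in $A\# V$ gives $\sum_{I\dot\cup J}(-1)^{|I|}\bigl(g\prod_{i\in I}f_i\bigr)\#\bigl(\prod_{j\in J}f_j\bigr)\eta$, and the summand with $J=\varnothing$ is $\pm(gf_1\cdots f_{N+1})\#\eta$ with $\eta\in V$ a \emph{bare} vector field. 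This term does not preserve $1\otimes W$ (since $\eta\notin\m_P V$, the vector $\eta w$ has components along $\pd{x_i}\otimes W$), so Lemma \ref{trunc} — which only identifies $\m_P V/\m_P^{N+1}V$ with $\hL_+/\m^N\hL_+$ — does not apply to it, and its vanishing is not a consequence of $\m_P^{N+1}V\cdot W=0$. The paper handles this by first reducing to all $f_j\in\m_P$ (via $\delta(1)=0$) and then treating the summands case by case: the cross terms with $I,J\neq\varnothing$ die because $g\prod_{i\in I}f_i\in\m_P$ evaluates to $0$ at $P$ after $\bigl(\prod_{j\in J}f_j\bigr)\eta\in\m_P V$ has returned the vector to $W$ (note this uses $\m_P\cdot W=0$, not the hypothesis on $\m_P^{N+1}V$); the $I=\varnothing$ term dies by $\m_P^{N+1}V\cdot W=0$; and the problematic $J=\varnothing$ term needs the extra one-line commutation $(f_1\cdots f_{N+1})\,\eta\, w=(f_1\cdots f_N)\,\eta\,(f_{N+1}w)+(f_1\cdots f_N)\,\eta(f_{N+1})\,w$, where both terms vanish by evaluation at $P$ since $f_{N+1}\in\m_P$ and $f_1\cdots f_N\in\m_P$. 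Adding this case analysis makes your argument complete.
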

\begin{proof} 
This proof is due to Henrique Rocha \cite{HR}. 

By Lemma 12 in \cite{BI}, the space $\Delta^{N+1} \otimes_A V$ is spanned by 
$$\left\{ (g \otimes 1) \delta (f_1) \delta (f_2) \ldots \delta (f_{N+1}) \eta \, | \, g, f_1, \ldots, f_{N+1} \in A, \eta \in V \right\}.$$
Since $A = K \cdot 1 \oplus \m_P$, we may assume that each $f_j$ is either 1, or belongs to $\m_P$. However, $\delta(1) = 0$.
Thus we may assume that all $f_j \in \m_P$. 

Let us show that $\Delta^{N+1} \otimes_A V$ annihilates $1 \otimes W$. 
\begin{align*}
(g \otimes 1) \delta (f_1) \delta (f_2) \ldots &\delta (f_{N+1}) \, \eta \, w \\
&= g \sum_{I \dot\cup J = \{1, \ldots, N+1 \} } (-1)^{|I|} \left( \prod_{i \in I} f_i \right) \left( \prod_{j \in J} f_j \cdot \eta \right) w.
 \end{align*}
The terms with $I \neq \varnothing$, $J \neq \varnothing$ vanish since in this case $\prod\limits_{j \in J} f_j \cdot \eta \in \m_P V$ and $\big( \prod\limits_{j \in J} f_j \cdot \eta \big) w  \in W$, while $\prod\limits_{i \in I} f_i$ annihilates $W$.
For the two remaining terms, $(f_1 \ldots f_{N+1} \eta) w = 0$ since $f_1 \ldots f_{N+1} \eta \in \m_P^{N+1} V$, while
$$(f_1 \ldots f_{N+1}) \, \eta \, w = (f_1 \ldots f_{N})\, \eta\,  f_{N+1} \, w +  (f_1 \ldots f_{N}) \,\eta(f_{N+1})\, w = 0.$$
%

The general case is then proved by induction on the degree of the monomial in $ \pp{}{x_1}, \ldots, \pp{}{x_n}$ in front of $w$.
To carry out the induction step, we will need the commutation relation:
\begin{align*}
\big[ \pp{}{x_i} , \, (g \otimes 1) \delta (f_1) \delta (f_2) \ldots &\delta (f_{N+1}) \eta \big] 
= (\pp{g}{x_i}\otimes 1) \delta (f_1) \delta (f_2) \ldots \delta (f_{N+1}) \eta \\
&+ \sum_{k=1}^{N+1}  (g \otimes 1) \delta (f_1) \ldots \delta \left(\pp{f_k}{x_i}\right) \ldots \delta (f_{N+1}) \eta  \\
&+  (g \otimes 1) \delta (f_1) \delta (f_2) \ldots \delta (f_{N+1})  \left[ \pp{}{x_i}, \eta \right].
\end{align*}
Decomposing $\pp{f_k}{x_i}$ into $K \cdot 1 + \m_P$, we will be able to carry out the step of induction.
\end{proof}

\begin{corollary}
The Rudakov module $R^W_P$ admits the action of the completed algebra $A\widehat{\#} U(V)$.
\end{corollary}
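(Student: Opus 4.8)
The plan is to deduce this corollary directly from Lemma \ref{Rdiff} together with the characterization of $N$-differentiable $AV$-modules recalled earlier in the paper. Recall from Section 4 that $N$-differentiable $AV$-modules are precisely those annihilated by $\Delta^N \otimes_A V$, and that the completed algebra $A\widehat{\#} U(V)$ is, locally in an \'etale chart, obtained as $\Us(A, A\widehat{\#}V)$ where $A\widehat{\#}V = A\widehat{\otimes}_K A \otimes_A V = \varprojlim_m (A\#V)/(\Delta^m \otimes_A V)$. So the essential point is that any $AV$-module killed by $\Delta^{N+1}\otimes_A V$ automatically extends to a module over the completion.

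First I would observe that Lemma \ref{Rdiff} gives that $R^W_P$ is annihilated by $\Delta^{N+1}\otimes_A V$, hence $R^W_P$ is a module over the quotient $(A\#V)/(\Delta^{N+1}\otimes_A V)$ as a Lie algebra, and in fact over the associative quotient $(A\#U(V))/\langle \Delta^{N+1}\otimes_A V\rangle$. Next I would note that since the ideals $\Delta^m\otimes_A V$ are decreasing and the completion $A\widehat{\#}V$ is the inverse limit over $m$, any module structure that factors through the quotient by $\Delta^{N+1}\otimes_A V$ factors through the inverse limit as well: concretely, an element of $A\widehat{\#}U(V)$ acts on $R^W_P$ by choosing any representative modulo the relations, and well-definedness is exactly the statement that $\Delta^{N+1}\otimes_A V$ acts as zero. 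One should check that the induced action is compatible with the associative product, but this is immediate because the product on $A\widehat{\#}U(V)$ is the continuous extension of the product on $A\#U(V)$ and the kernel we quotient by is a two-sided ideal. Finally, since $R^W_P$ was constructed locally in an \'etale chart $U$ but the Rudakov sheaf is defined on all of $X$ by the same induction recipe from $W$ on the stalk at $P$ (and is supported at $P$), the local verification suffices, and the module over $\AV(U) \cong \D(U)\otimes U(\hL_+)$ glues as required.

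The only real subtlety — and hence the step I would be most careful about — is making sure that "annihilated by $\Delta^{N+1}\otimes_A V$" genuinely implies the action of $A\#U(V)$ extends continuously to $A\widehat{\#}U(V)$, rather than merely to the single quotient by $\langle \Delta^{N+1}\otimes_A V\rangle$. This is true because $A\widehat{\#}U(V) = \varprojlim_m (A\#U(V))/\langle \Delta^m\otimes_A V\rangle$ (see \cite{BI}), so the quotient map $A\#U(V)\to (A\#U(V))/\langle \Delta^{N+1}\otimes_A V\rangle$ factors through $A\widehat{\#}U(V)$; composing with the action on $R^W_P$ gives the desired action. I do not expect any computational obstacle here — the content is entirely in unwinding the definitions of the completion and of differentiability, both recalled in Section 4, and in invoking Lemma \ref{Rdiff}. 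In the write-up I would keep the proof to two or three sentences, since all the work has been done in Lemma \ref{Rdiff}.
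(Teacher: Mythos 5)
Your proposal is correct and follows exactly the route the paper intends: the corollary is an immediate consequence of Lemma \ref{Rdiff} together with the remark in the Jets section that differentiable $AV$-modules (those killed by some $\Delta^N\otimes_A V$) are precisely the ones on which the action extends to the completion, since the completed algebra surjects onto each finite quotient. The paper offers no separate proof, treating this as immediate, and your careful unwinding of why the action factors through the inverse limit is exactly the implicit argument.
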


Note that the Rudakov module is supported at the point $P$.

Consider now the $D$-module $\F_P$ of delta functions supported at a point $P \in X$. Let $U$ be an \'etale chart containing point $P$. Let $\delta_P$ be the generator of the evaluation $A$-module: $f \delta_P = f(P) \delta_P$. This induces a $D$-module
$$\F_P (U) =  K \left[ \pp{}{x_1}, \ldots, \pp{}{x_n} \right] \otimes \delta_P.$$ As a sheaf, $\F_P$ is also supported at $P$.

Let us construct a realization of Rudakov modules using delta functions. Let $K_\tr$ be a 1-dimensional $gl_n$-module, with the action given by the trace of a matrix. If we view it as a $GL_n$-module, the action is given by the determinant, and the corresponding module of tensor fields is the module $\Omega^n$ of top differential forms. 


\begin{theorem}
\label{real}
Let $W$ be a rational finite-dimensional module for $\hL_+$, and let $P \in X$.
We have an isomorphism of sheaves of $AV$-modules:
$$\R_P^W \cong \F_P \otimes_\OO \J^{W \otimes K_\tr}.$$
\end{theorem}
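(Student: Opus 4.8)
The plan is to verify the claimed isomorphism locally in an \'etale chart and then check that both sides carry the same gluing data. Fix an \'etale chart $U$ with uniformizing parameters $(x_1, \ldots, x_n)$ containing $P$, and let $A = \OO(U)$, $V = \Theta(U)$, $D = \D(U)$. On the right-hand side we have, by definition, $\F_P(U) \otimes_A \J^{W\otimes K_\tr}(U)$. Since $\F_P(U) = K[\pd{x_1}, \ldots, \pd{x_n}] \otimes \delta_P$ as a $D$-module and $\J^{W\otimes K_\tr}(U) = A \otimes (W \otimes K_\tr)$, the tensor product over $\OO$ collapses the polynomial functions of $A$ in the second factor against the evaluation module structure coming from $\delta_P$, so that the underlying vector space is $K[\pd{x_1}, \ldots, \pd{x_n}] \otimes W$, which is exactly the underlying space of $\R_P^W$ by its definition as an induced module. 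The first step, therefore, is to write down the obvious $K$-linear bijection between these two spaces and then to check that it intertwines the $A$-action and the $V$-action. The $A$-action on both sides is the evaluation action $f \cdot (\partial^k \otimes w) = f(P)(\partial^k \otimes w) + (\text{lower order in } \partial)$, which matches by a short computation using the Leibniz rule; the real content is the $V$-action.

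The key step is the comparison of the $V$-action. On $\R_P^W$ the action is induced from the action of $\m_P V$ on $W$ (via the isomorphism $\m_P V/\m_P^{N+1} V \cong \hL_+/\m^N\hL_+$ of Lemma \ref{trunc}) together with the free action of $\pd{x_1}, \ldots, \pd{x_n}$; concretely, for $\eta \in V$ one commutes $\eta$ past the polynomial $\partial^k$ and then uses that the constant-coefficient part of the resulting vector field acts by translating $\partial^k$ while the part in $\m_P V$ acts on $W$ through $\rho$. On the right-hand side, by Theorem \ref{bundle} the action of $f\pd{x_i}$ on $\J^{W\otimes K_\tr}$ is $f\pd{x_i}(g\otimes w) = f\,\partial_i g \otimes w + g(x)(f(x+X)-f(x))\pd{X_i}w$, and on $\F_P$ it is the standard $D$-module action of a vector field on delta functions, namely $f\pd{x_i} \cdot (\partial^k \otimes \delta_P)$ computed by the adjoint/dual rule so that $\pd{x_i}$ acts by $-$(translation) and multiplication by $f$ acts by $f(P) - \partial_i f(P)\pd{x_i} + \cdots$. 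Taking the tensor product action over $\OO$ and expanding, one should see the two contributions reorganize exactly into the induced-module formula: the polynomial-derivative part of $\F_P$ reproduces the free $\pd{x_i}$-action on $\R_P^W$, and the "charged" $\hL_+$-part of $\J^{W\otimes K_\tr}$, restricted to the fiber at $P$ via the Taylor expansion $f(x+X) - f(x) \leadsto \sum_{|s|>0}\frac{1}{s!}\partial^s f(P) X^s$, reproduces precisely the action of $\m_P V/\m_P^{N+1} V$ on $W$ under the isomorphism of Lemma \ref{trunc}. The twist by $K_\tr$ is forced: when a vector field acts on $\delta_P$ through the $D$-module $\F_P$, moving it past the evaluation functional produces the divergence term $\operatorname{div}(\eta)(P) = \partial_i(\eta^i)(P)$, which is exactly the discrepancy between the naive $\m_P V$-action on $W$ and the one on $W \otimes K_\tr$; matching this term is what pins down the appearance of $K_\tr$ and, equivalently, of $\Omega^n$.

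Once the local isomorphism $\R_P^W(U) \cong \F_P(U) \otimes_\OO \J^{W\otimes K_\tr}(U)$ of $AV$-modules is established, the final step is to check compatibility with the gluing on the overlap $U_1 \cap U_2$ of two \'etale charts both containing $P$. The gluing on $\J^{W\otimes K_\tr}$ is given by (\ref{gluing}), i.e. by $\rho(\varphi_G)$ for the coordinate-change automorphism $\varphi_G: X_i \mapsto G_i(y+Y) - G_i(y)$; the gluing on $\F_P$ is the change-of-coordinates transformation for the $D$-module of delta functions, which involves the Jacobian $\det(\partial H_j/\partial x_i)(P)$; and the gluing on $\R_P^W$ is the one induced from the change of variables in $K[\pd{x_1}, \ldots, \pd{x_n}]$ and the transformation of $W$. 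One has to verify these three patch together, and the point is that the Jacobian factor from $\F_P$ is precisely the action of the $GL_n$-part of $\varphi_G$ on the $K_\tr$-factor, so the two determinant contributions combine correctly; equivalently, specializing the automorphism $\varphi_G$ at $P$ recovers the coordinate-change transformation on Rudakov's module. The main obstacle I anticipate is bookkeeping in the $V$-action comparison: correctly tracking the divergence/Jacobian term through the tensor product over $\OO$ and identifying it with the $K_\tr$-twist, while keeping the Taylor-expansion conventions for $f(x+X)$ and the sign conventions for $\psi$ consistent with the rest of the paper. This is a routine but delicate computation rather than a conceptual difficulty, since all the structural inputs (Lemma \ref{trunc}, Theorem \ref{bundle}, Lemma \ref{Rdiff}) are already in place.
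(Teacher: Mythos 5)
Your proposal is correct and follows essentially the same route as the paper: reduce to a single \'etale chart, identify both sides with $K[\pd{x_1},\ldots,\pd{x_n}]\otimes W$, and compare the $A$- and $V$-actions on $1\otimes W$, the key point being that the divergence term $-\pp{f}{x_i}(P)\,\delta_P$ from the action of $f\pd{x_i}$ on $\delta_P$ cancels against the trace term coming from the $K_\tr$-twist. The only difference is that your final gluing check across charts is unnecessary: since both sheaves are supported at $P$, the paper simply notes that verifying the isomorphism on one chart containing $P$ suffices.
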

\begin{proof}
Since both sheaves are supported at $P$, it is sufficient to verify the isomorphism locally, in an \'etale chart $U$, containing point $P$. We have
$$\R_P^W (U) \cong \F_P(U) \otimes_\OO \J^{W \otimes K_\tr}(U) $$
as vector spaces, since each of them is isomorphic to $K \left[ \pp{}{x_1}, \ldots, \pp{}{x_n} \right] \otimes W$.
Let us show that this identification is an isomorphism of $AV$-modules. Since the commutation relations of the elements of $A$ and $V$ with
$\pp{}{x_i}$ are the same in both modules, it is sufficient to show that the actions of $A$ and $V$ on $W$ agree.

In both modules, $A$ acts on $W$ by evaluation at $P$. Since in a jet module $\pp{}{x_i}$ annihilates the space $1 \otimes W$, 
we see that in both modules $\pp{}{x_i}$ acts on $W$ freely. The final case to consider is the action of $f \pp{}{x_i}$ with $f  \in \m_P$  on $W$ and $W \otimes K_\tr$ respectively. 

In Rudakov module $f \pp{}{x_i}$ acts on $W$ via the action (see Lemma \ref{trunc}):
$$\sum_{0 < |s| \leq N} \frac{1}{s!} \, \frac{\del^s f}{\del x^s}(P) \, \rho \left( X^s \pp{}{X_i} \right).$$
 In $\F_P \otimes_\OO \J^{W \otimes K_\tr}$ the action is
\begin{align*}
f \pp{}{x_i} \delta_P \otimes w \otimes 1_\tr 
&= \left( - \pp{f}{x_i}(P) \delta_P \right)  \otimes w \otimes 1_\tr \\
&+ \delta_P \otimes \sum_{0 < |s| \leq N} \frac{1}{s!} \, \frac{\del^s f}{\del x^s}(P) \, \rho \left( X^s \pp{}{X_i} \right) w \otimes 1_\tr \\
&+  \delta_P \otimes w \otimes \sum_{k=1}^n \pp{f}{x_k}(P) \tr(E_{ki}) 1_\tr.
\end{align*}
The first and the last terms in the right-hand side cancel out, and we get that the two actions on $\R_P^W$ and 
on $\F_P \otimes_\OO \J^{W \otimes K_\tr}$ agree.
\end{proof}

\begin{remark} We note that this gives another proof that Rudakov modules are differentiable, as both of the tensor factors in the decomposition of the above theorem are themselves differentiable. \end{remark}

In \cite{BFN} the authors constructed a contravariant pairing 
$$R_P^W \times J^{W*} \rightarrow K,$$
where $W^*$ is the dual $\hL_+$-module. Let us interpret this result in light of Theorem \ref{real}.
The above invariant pairing is equivalent to the existence of a homomorphism of $V$-modules (but not as $A$-modules!)
$$R_P^W \otimes_A J^{W*} \rightarrow K.$$
By Theorem \ref{real}, 
$$  R_P^W \otimes_A J^{W*} \cong \F_P \otimes_A  \Omega^n \otimes_A J^W \otimes_A J^{W^*}.$$
Since $J^{W^*} = \Hom_A ( J^W, A)$ as $AV$-modules, there exists a homomorphism of $AV$-modules
$$J^{W^*} \otimes_A J^W \rightarrow A.$$
Also, by Theorem \ref{real}, we can realize the Rudakov module corresponding to the trivial one-dimensional $\hL_+$-module $K \cdot 1$ as a tensor product:
$$R_P^{K \cdot 1} \cong \F_P \otimes_A \Omega^n.$$
It is easy to see that for this Rudakov module, the space $V R_P^{K(1)} $ is a $V$-submodule of codimension 1, which gives us a 
homomorphism of $V$-modules:
$$ \F_P \otimes_A \Omega^n \rightarrow K.$$
Combining, we get a chain of homomorphisms of $V$-modules:
$$R_P^W \otimes_A J^{W*}  \cong  \F_P \otimes_A \Omega^n \otimes_A J^W \otimes_A J^{W^*} 
\rightarrow  \F_P \otimes_A \Omega^n \otimes_A A \rightarrow K.$$
 
\begin{remark} Recall that by Kashiwara's lemma, $D$-modules supported on a subvariety $Z \subset X$ are precisely those pushed forward from $Z$. In particular, we have the $D$-module of delta functions along $Z$, here denoted $\F_{Z}$. Taking the tensor product of these with jet-type $AV$ modules produces many more $AV$ modules generalizing Rudakov's original construction. It remains an interesting problem to investigate functoriality of $AV$-modules under general maps, or even closed embeddings.
\end{remark}

\section{Holonomic $\AV$-modules}
\label{Holo}

Let $C$ be an associative algebra and $M$ be a $C$-module with a finite set $S$ of generators.
For a finite subset $B \subset C$, we define a filtration in $M$:
$$ F_0^B \subset  F_1^B \subset  F_2^B \subset \ldots,$$
where $F_0^B = \Span (S)$ and $F_{k+1}^B = \Span \left\{ F_k^B, \, B F_k^B \right\}$.

\begin{definition}
Gelfand-Kirillov dimension of $M$ is 
$$\text{GK}\dim (M) = \sup\limits_B \varlimsup\limits_{k \to \infty} \root{k}\of{\dim F_k^B}.$$
\end{definition}



\begin{definition}
A sheaf $\M$ of $\AV$-modules on $X$ is called holonomic if for every \'etale chart $U$, an $\AV(U)$-module $\M(U)$ 
is finitely generated and has Gelfand-Kirillov dimension $n = \dim X$. 
\end{definition}

It is easy to see that both jet modules and Rudakov modules are holonomic.

\medskip
{\bf Conjecture.} Every holonomic sheaf of $AV$-modules is differentiable.

It was proved in \cite{BR} that for a smooth affine variety every $AV$-module, which is finitely generated over $A$, is differentiable. This is a special case of the above conjecture. In Lemma \ref{Rdiff}, we proved that this conjecture also holds for Rudakov modules.

Let us finish this section with one more conjecture.

\medskip
{\bf Conjecture.} The Lie algebra of vector fields on a smooth affine variety is finitely generated as a Lie algebra.



\section{Examples of rank 2 bundles of $AV$-modules on $\Proj^1$.}

As an illustration of our methods, we will construct in this section two families of rank 2 bundles of $AV$-modules on $\Proj^1$.

In our first example, we begin with a family of 2-dimensional representations $W$ of $\hL_+$:
\begin{align*}
\rho_m \left( X \dd{X} \right) = &
\begin{pmatrix} m+1 & 0 \\ 0 & m \end{pmatrix}, \ \ 
\rho_m \left( X^2 \dd{X} \right) = 
\begin{pmatrix} 0 & 1 \\ 0 & 0 \end{pmatrix}, \\  
&\rho_m \left( X^k \dd{X} \right) = 0 \text{ \ for \ } k \geq 3.
\end{align*}

This representation is rational whenever $m \in \Z$, and integrates to the following representation of $\Aut K[[X]]$:
for $\varphi (X) = a X + b X^2 + $ {\it higher order terms},
$$\rho_m (\varphi) = a^m \begin{pmatrix} a & ba^{-1} \\ 0 & 1 \end{pmatrix}.$$

As usual, we cover $\Proj^1$ with two copies of $\Aff^1$ with the coordinate transformation between the charts given by 
$y = -x^{-1}$ (the negative sign will be more convenient for our computations). In our earlier notations, $H(x) = -x^{-1}$, $G(y) = -y^{-1}$, $\frac{dH}{dx} (G(y)) = y^2$.

The automorphism of $K[[X]]$, corresponding to this change of variables is
$$\varphi_G (X) = G(y+Y) - G(y) = y^{-2} Y - y^{-3} Y^2 + y^{-4} Y^3 + \ldots$$
and 
$$\rho_m (\varphi_G) = y^{-2m} \begin{pmatrix} y^{-2} & -y^{-1} \\ 0 & 1 \end{pmatrix}.$$

Let us denote the bases of the trivializations of the vector bundle $\J^W$ in $x$-chart by $\{ e_1^x, e_2^x \}$, and in $y$-chart by 
$\{ e_1^y, e_2^y \}$. Then by (\ref{gluing}), we have the following coordinate transformation in the bundle:
\begin{align*}
&e_1^x = y^{-2m-2} e_1^y, \\
&e_2^x = y^{-2m} e_2^y - y^{-2m-1} e_1^y.
\end{align*} 

Using (\ref{Ltrans}), we have the following transformations in the bundle $\Lp$ of virtual jets of vector fields on $\Proj^1$:
\begin{align*}
X \dd{X} &= Y\dd{Y} + y^{-1} \, Y^2\dd{Y}, \\
X^2 \dd{X} &=   y^{-2} \, Y^2\dd{Y}, \\
X^3 \dd{X} &=   y^{-4} \, Y^3\dd{Y} \,+ \text{\it higher order terms}. 
\end{align*}

It follows from Theorem \ref{bundle} (and could be easily verified directly), that coordinate transformation laws in $\Lp$ and $\J^W$ are compatible, and that $\J^W$ is, in fact, an $\Lp$-bundle. 

Bundle $\J^W$ is also an $\Lp$-charged sheaf of $D$-modules. In each chart, $D$-module structure is given by the action on functions, and the transformation law for $\pp{}{x}$ is given by (\ref{dtrans}):
$$\pp{}{x} = y^2 \pp{}{y} + 2y\, Y\dd{Y} + Y^2 \dd{Y},$$
and we could easily verify that this relation holds in $\J^W$.

The $AV$-module structure on $\J^W$ can be written explicitly as follows:
\begin{align*}
&f(x) \dd{x} \cdot g(x) e_1^x = f \frac{dg}{dx} \,e_1^x + (m+1) \,g \frac{df}{dx} \,e_1^x, \\
&f(x) \dd{x} \cdot g(x) e_2^x = f \frac{dg}{dx} \,e_2^x + m \,g \frac{df}{dx} \,e_2^x + \frac{1}{2}\, g \frac{d^2f}{dx^2} \,e_1^x, \\
\end{align*}
and analogously in $y$-chart.

Even though the representation we used here is rational only when $m \in \Z$, the above formulas yield a well-defined bundle for all $m \in \frac{1}{2}\Z$.

\

Our second example is based on another 2-dimensional representation of $\hL_+$:
\begin{align*}
&\sigma_m \left( X \dd{X} \right) = 
\begin{pmatrix} m+2 & 0 \\ 0 & m \end{pmatrix},  \ \ 
\sigma_m \left( X^2 \dd{X} \right) = 0 \\
&\sigma_m \left( X^3 \dd{X} \right) =
\begin{pmatrix} 0 & 1 \\ 0 & 0 \end{pmatrix},  \ \ \ \ \ \ \ \ 
\sigma_m \left( X^k \dd{X} \right) = 0 \text{ \ for \ } k \geq 4.
\end{align*}

When $m \in \Z$, this representation integrates to a rational representation of $\Aut K[[X]]$. For $\varphi(X) = a X + b X^2 + c X^3 +$ {\it higher order terms},
$$\sigma_m (\varphi) = a^m \begin{pmatrix} a^2 & ca^{-1} - b^2 a^{-2} \\ 0 & 1 \end{pmatrix}.$$
This gives us the coordinate transformation law in the bundle $\J^W$:
\begin{align*}
&e_1^x = y^{-2m-4} e_1^y, \\
&e_2^x = y^{-2m} e_2^y.
\end{align*}   

By Theorem \ref{bundle}, $\J^W$ is an $\Lp$-charged sheaf of $D$-modules, and is a bundle of $AV$-modules, with the following
explicit formulas for the action:
\begin{align*}
&f(x) \dd{x} \cdot g(x) e_1^x = f \frac{dg}{dx} \,e_1^x + (m+2) \,g \frac{df}{dx} \,e_1^x, \\
&f(x) \dd{x} \cdot g(x) e_2^x = f \frac{dg}{dx} \,e_2^x + m \,g \frac{df}{dx} \,e_2^x + \frac{1}{6}\, g \frac{d^3f}{dx^3} \,e_1^x.\\
\end{align*}

Again, this bundle is well-defined for all $m \in \frac{1}{2} \Z$.

\end{document}